\numberwithin{equation}{section}
\theoremstyle{plain}
\newtheorem{thm}{Theorem}[section]
\newtheorem{lem}[thm]{Lemma}
\newtheorem{prop}[thm]{Proposition}
\newtheorem{cor}[thm]{Corollary}
\theoremstyle{definition}
\theoremstyle{remark}
\newcommand{\im}{\operatorname{im}}
\newcommand{\BB}[1]{\mathbb{#1}} 
\newcommand{\curly}[1]{\mathscr{#1}} 
\newcommand{\QQ}{\BB{Q}}
\newcommand{\ZZ}{\BB{Z}}
\newcommand{\PP}{\BB{P}}
\newcommand{\GG}{\BB{G}}
\newcommand{\Aut}{\operatorname{Aut}}
\newcommand{\Pic}{\operatorname{Pic}}
\newcommand{\Gal}{\textup{Gal}}
\newcommand{\et}{\text{\'{e}t}}
\newcommand{\Br}{\operatorname{Br}}
\newcommand{\spec}{\operatorname{Spec}}
\newcommand{\Ho}{\textup{H}}
\begin{document}

\title{Vanishing of the Brauer Group of a del Pezzo surface of degree 4}
\author{Manar Riman}
\maketitle

\begin{abstract}
We explicitly construct a del Pezzo surface $X$ of degree 4 over a field $k$ such that $\Ho^1(k,\Pic\overline X)\simeq\ZZ/2\ZZ$ while $\Br X/\Br k$ is trivial. This proves that the algorithm to compute the Brauer group in \cite{BT} cannot be generalized in some cases.

\end{abstract}

\section{Introduction}
The Brauer group of a field $k$, denoted by $\Br k$, was defined by Richard Brauer in 1929. It classifies central simple algebras over a field $k$ with respect to the Morita equivalence. Two central simple algebras $\curly A$ and $\curly B$ are said to be Morita equivalent over $k$ if there there exist two positive integers $n$ and $m$ such that $\curly A\otimes \textup{M}_n(k)\simeq\curly B\otimes \textup{M}_m(k)$ as $k$-algebras. Moreover, $\Br k$ is isomorphic to the Galois cohomology group $\Ho^2(k,k_s^{\times})$; we say this is the cohomological definition of the Brauer group. The cohomological definition of the Brauer group of a field can be generalized to define the Brauer group of a scheme $X$ denoted by $\Br X$ as the \'etale cohomology $\Ho_{\et}^2(X,\GG_m)$, where $\GG_m$ is the sheaf of units on $X$. When $X$ is projective, geometrically integral and smooth, $\Br X$ is realized concretely as the equivalence classes of Azumaya algebras over $X$. This has opened the door to many arithmetic and geometric applications of the Brauer group.

Various theorems and algorithms have been implemented to compute the quotient of the Brauer group $\Br X/\im(\Br k\rightarrow\Br X)$ for certain schemes. For example, such algorithms have been implemented for most cubic surfaces; See \cite{CTcubic}, \cite{EJ} and \cite{EJ12}. Algorithms have also been implemented for some del Pezzo surfaces of degree 2 as in \cite{Corn}, and for most del Pezzo surfaces of degree 4 as in \cite{BBFL}, and \cite{BT}. 

In this paper we highlight a part of the algorithm to compute $\Br X/\im(\Br k\rightarrow\Br X)$ for a del Pezzo surface of degree 4 as in \cite[Section 4.1]{BT}. Furthermore, we prove that it cannot be generalized in some cases. 

Let $X$ be a del Pezzo surface of degree 4. By embedding $X$ anticanonically into $\PP^4$, we view it as the intersection of two quadrics as in \cite[Proposition 3.26]{Wit}. The two quadrics $Q$ and $Q^{'}$ define a pencil $\{\lambda Q+\mu Q^{'}:[\lambda,\mu]\in\PP^1\}$. By \cite[Proposition 3.26]{Wit}, the pencil has five degenerate geometric fibers which are rank 4 quadrics. Let $\curly S$ be the degree 5 subscheme of $\PP^1$ representing the degeneracy locus of the pencil. For every closed point $T\in\curly S$, denote by $k(T)$ the residue field of $T$ and by $Q_T$ the corresponding quadric in the pencil. 

The proof of the algorithm to compute $\Br X/\im(\Br k\rightarrow\Br X)$ for a del Pezzo surface of degree 4 as in \cite[Section 4.1]{BT} uses the exact sequence, resulting from the Hochschild-Serre spectral sequence,
$$
0\rightarrow\Pic X\rightarrow (\Pic \overline X)^{G_k}\rightarrow\Br k\rightarrow\Br_1 X\rightarrow \Ho^1(k,\Pic\overline X)\rightarrow \Ho^3(k,\GG_m),
$$
where $G_k$ is the absolute Galois group of $k$ and $\Br_1X=\ker(\Br X\rightarrow\Br\overline X)$. The Hochschild-Serre spectral sequence yields the isomorphism
$$
\frac{\Br_1 X}{\im(\Br k\rightarrow\Br X)}\simeq \Ho^1(k,\Pic\overline X)
$$
under some arithmetic assumptions related to the solvability of the quadrics $Q_T$ over $k(T)$ for $T\in\curly S$. Depending on the field $k$, the map $\Ho^1(k,\Pic\overline X)\rightarrow \Ho^3(k,\GG_m)$ might be non trivial as we prove for a certain del Pezzo surfaces of degree 4 in this paper. In other words, this proves that the arithmetic assumption in the algorithm in \cite[Section 4.1]{BT} is necessary; changing the arithmetic input of the algorithm leads to a different outcome. In particular, we prove the following theorem.
\begin{thm}
\label{thm:main}
Let $k=\QQ^{cycl}(a,b,c)$ where $a,b$ and $c$ are independent transcendental elements. Let $X$ be the del Pezzo surface of degree 4 in $\PP^4_k$ defined by the intersection of the following two quadrics
\begin{equation*}
\begin{split}
Q&:ax_0^2+bx_1^2+x_2^2+cx_4^2=0\\
Q^{'}&:bcx_0^2+x_1^2+x_2^2+ax_3^2=0.
\end{split}
\end{equation*}
Then $\Ho^1(k,\Pic \overline X)\simeq\ZZ/2\ZZ$ while $\frac{\Br X}{\Br k}$ is trivial.
\end{thm}

Uematsu has done similar work for an affine diagonal quadric \cite{quadric} and for a diagonal cubic surface \cite{cubic}. In contrast to his proofs, our proof does not rely on computing the boundary map $d_2^{1,1}: H^1(k,\Pic\overline X)\rightarrow H^3(k,\GG_m)$ of the Hochschild-Serre spectral sequence. Instead it relies on the algorithm in \cite[Section 4.1]{BT} and the work done by Harpaz \cite{Harpaz}. In \cite{Harpaz}, Harpaz proves that for a specific example of an affine diagonal quadric $U$ over a field $F$, $\Br U/\Br F$ vanishes while $H^1(F,\Pic\overline U)\simeq \ZZ/2\ZZ$. In his argument, he base extends $X$ to a field $L$; over this field he shows that $\Br X_L/\Br L\simeq \ZZ/2\ZZ$. Finally, he proves that the generating class $\curly A$ of $\Br X_L/\Br L$ is not in the image of $\Br X/\Br k$. We carry a similar proof to Harpaz's argument; we base extend to the field $L=k(\sqrt a)$. For the explicit computation of the class of the algebra $\curly A$ that generates $\Br X_L/\Br L$, we use the algorithm in \cite[Section 4.1]{BT}. 

\textbf{Outline:}
Section 2 provides some background about del Pezzo surfaces of degree 4. In particular, we highlight a part of the algorithm in \cite[Section 4.1]{BT} to relate $H^1(k,\Pic\overline X)$ to $\Br X/\Br k$. We prove the main theorem in Section 3.

\textbf{Notation:}
Throughout this paper, we use $k$ to denote a field and $X$ to denote a scheme. We denote by $\overline k$ a fixed algebraic closure of $k$. We denote by $\overline X$ the base change of $X$ to $\overline k$. Let $G_k$ denote the absolute Galois group of $k$. Denote by $H^i(k,A)$ the $i$-th group cohomology of the Galois group $G_k$ and the $G_k$-module $A$.

\section*{Acknowlegements}

I thank my advisor, Bianca Viray, for suggesting the problem and for the many helpful discussions. I also thank Yonatan Harpaz for emailing us his typed notes \cite{Harpaz}.

\section{Background}
\subsection{ Background on del Pezzo Surfaces of Degree 4}

We summarize some facts that we need in this paper about del Pezzo surfaces of degree 4 and we fix some notation. For further details, this information can be found in \cite[Section 2]{BT}, \cite[Section 1]{tony}, and \cite{Wit}.

Let $Y$ be a del Pezzo surface of degree 4 over a field $K$. By embedding it anticanonically into $\PP^4$, we view it as the intersection of two quadrics $Q$ and $Q^{'}$ as in \cite[Proposition 3.26]{Wit}. The two quadrics $Q$ and $Q^{'}$ define a pencil $\{\lambda Q+\mu Q^{'}:[\lambda,\mu]\in\PP^1\}$. By \cite[Proposition 3.26]{Wit}, the pencil has five degenerate geometric fibers which are rank 4 quadrics.

Let $\curly S$ be the degree 5 subscheme of $\PP^1$ representing the degeneracy locus of the pencil. For every closed point $T\in\curly S$, denote by $K(T)$ the residue field of $T$ and by $Q_T$ the corresponding quadric in the pencil. Let $\epsilon_{T}$ be the discriminant of a smooth rank 4 quadric obtained by restricting $Q_T$ to a hyperplane $H_T$ in $\PP^4$ not containing the vertex of $Q_T$. By \cite[Section 3.4.1]{Wit}, the square class of $\epsilon_T$ does not depend on the choice of $H_T$. So we consider $\epsilon_T$ as an element in $K(T)/K(T)^{\times 2}$. Over $\overline K$, let $\curly S(\overline K)=\{t_0,\ldots ,t_4\}$ where $t_0,\ldots, t_4\in \PP^1(\overline k)$. Denote by $K(t_i)$ the smallest field contained in $\overline k$ and containing $t_i$, and by $Q_{t_i}$ the corresponding quadric. Let $\epsilon_{t_i}$ be the discriminant of the smooth rank 4 quadric obtained by restricting $Q_{t_i}$ to a hyperplane $H_i$ in $\PP^4$ not containing the vertex of $Q_{t_i}$. By \cite[Section 3.4.1]{Wit}, the square class of $\epsilon_{t_i}$ does not depend on the choice of $H_i$. So we consider $\epsilon_{t_i}$ as an element in $K(t_i)/K(t_i)^{\times 2}$

Now we turn our attention to the Picard group of $\overline Y$ and the Galois action on its classes. By \cite[Theorem 1.6]{tony}, $\overline Y$ is the blowup of $\PP^2$ at 5 points. Let $\{e_1,\ldots,e_5\}$ be the classes of the exceptional divisors associated to the blown up points. Let $l$ be the class of the pullback of a line in $\PP^2$ that does not not pass through any of the blown up points. By \cite[Proposition V.3.2]{Hart}, $\Pic \overline Y\simeq \ZZ^6$ and admits $\{e_1,\ldots,e_5,l\}$ as a basis. Furthermore, we will describe $\Pic\overline Y$ in terms of some conic classes that will be useful for the algorithm in \cite[Section 4.1]{BT}.

\begin{thm}{\cite[Theorem 2]{BBFL}}
\label{Theorem:conics}
Let $Y$ be a del Pezzo surface of degree $4$ over a field $K$. There are $10$ families of conics on $Y$. Moreover, their classes $C_0,\ldots,C_4,C_0^{'},\ldots,C_4^{'}$ in $\Pic\overline Y$ can be written in terms of the basis of $\Pic\overline Y$ as $C_i=l-e_i$ and $C_i^{'}=H-C_i$ where $H$ is the hyperplane class of $Y$. Over $\overline K$ the conics in each class form a pencil.\qed
\end{thm}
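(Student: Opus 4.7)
The plan is to reduce the classification of conic classes on $\overline Y$ to enumerating integer solutions of a Diophantine system coming from the numerical characterization of a conic, and then to exhibit an honest pencil in each such class using the blowup presentation of $\overline Y$ as $\PP^2$ blown up at five points $p_1,\ldots,p_5$.

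First, working over $\overline K$ and using that $\overline Y \subset \PP^4_{\overline K}$ is anticanonically embedded, so $H = -K_{\overline Y}$, I would characterize a conic $C \subset \overline Y$ as a smooth rational curve with $C \cdot H = 2$. The adjunction formula $2 p_a(C) - 2 = C^2 + C \cdot K_{\overline Y}$ then forces $C^2 = 0$. Writing $[C] = al - \sum_{i=1}^5 b_i e_i$ in the basis of $\Pic \overline Y$ from Hartshorne, the two conditions $[C]\cdot H = 2$ and $[C]^2 = 0$ become the Diophantine system
\[
3a - \sum_{i=1}^5 b_i = 2, \qquad a^2 = \sum_{i=1}^5 b_i^2.
\]

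Next, I would enumerate solutions via Cauchy--Schwarz: $(3a-2)^2 = \bigl(\sum b_i\bigr)^2 \leq 5 \sum b_i^2 = 5 a^2$, which rearranges to $4a^2 - 12a + 4 \leq 0$ and forces $a \in \{1, 2\}$ (the cases $a \leq 0$ are ruled out since $[C]\cdot H = 2 > 0$ together with $[C]^2 = 0$). For $a = 1$ the unique solutions have one $b_i = 1$ and the rest $0$, yielding the five classes $C_i = l - e_i$. For $a = 2$ one has $\sum b_i = 4$ and $\sum b_i^2 = 4$: any $|b_j| \geq 2$ already exceeds the sum-of-squares budget, and any negative coordinate would force compensating positives that likewise overshoot, so the only solutions have four $b_j = 1$ and one $b_j = 0$, yielding $2l - \sum_{j \neq i} e_j$. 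A direct check using $H = 3l - \sum_j e_j$ confirms $2l - \sum_{j \neq i} e_j = H - (l - e_i) = H - C_i$, identifying these as the $C_i'$ and producing exactly $10$ classes.

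Finally, I would verify that each class moves in a pencil via the blowup description: $l - e_i$ is the strict transform of the pencil of lines in $\PP^2$ through $p_i$, and $H - C_i$ is the strict transform of the pencil of plane conics through the four points $\{p_j : j \neq i\}$, which impose four independent conditions on the $5$-dimensional linear system of plane conics and hence cut out a $1$-dimensional subsystem. As a cross-check, Riemann--Roch on the smooth rational surface $\overline Y$ yields
\[
h^0(\overline Y, [C]) - h^1(\overline Y, [C]) + h^0(\overline Y, K_{\overline Y} - [C]) = 1 + \tfrac{1}{2}[C]\cdot([C] - K_{\overline Y}) = 2,
\]
and $h^0(K_{\overline Y} - [C]) = 0$ since $(K_{\overline Y} - [C]) \cdot H < 0$, so $h^0([C]) \geq 2$. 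The hard part will be the $a = 2$ Diophantine enumeration, where one must carefully rule out exotic solutions with some $b_i$ negative; the Cauchy--Schwarz bound together with the tight sum-of-squares constraint dispatches these cases uniformly.
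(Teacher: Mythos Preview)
The paper does not supply a proof of this statement: it is quoted from \cite{BBFL} and closed immediately with \qed. So there is no in-paper argument to compare against, and your proposal is not redundant with anything here.

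Your argument is correct and is the standard route to this result. The numerical characterization $C\cdot H=2$, $C^2=0$ is forced by adjunction for an irreducible degree-$2$ curve on an anticanonically embedded surface; the Diophantine system $3a-\sum b_i=2$, $a^2=\sum b_i^2$ is set up correctly in the basis $l,e_1,\ldots,e_5$; and the Cauchy--Schwarz bound $4a^2-12a+4\le 0$ already pins $a\in\{1,2\}$ without needing the separate $a\le 0$ remark. The case analyses for $a=1$ and $a=2$ are clean, and the identification $2l-\sum_{j\ne i}e_j=H-C_i$ is exactly the relation $C_i'=H-C_i$ in the statement. For the pencil assertion, your explicit descriptions (lines through $p_i$, plane conics through $\{p_j:j\ne i\}$) are the right constructions; you are implicitly using that the five blown-up points are in general position, which is part of the del Pezzo hypothesis, so that four of them impose independent conditions on plane conics. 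The Riemann--Roch cross-check is fine and shows $h^0\ge 2$, though to conclude the linear system is exactly a pencil (not larger) you would also want $h^1=0$, which follows for these nef classes on a del Pezzo surface, or simply from the explicit descriptions.

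In short: your proof fills in what the paper omits by citation, and nothing in it is in tension with how the result is used downstream.
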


As in \cite[Section 2.3]{BT}, we define the conic classes $C_0,\ldots, C_4,C_0^{'}, \ldots, C_4^{'}$ in $\Pic\overline Y$ as follows. For $i=0,\ldots,4$, let $K_i$ be a finite extension of $K$ such that the rank 4 quadric $Q_{t_i}$ has a smooth $K_i$-point and such that $[K_i(\sqrt\epsilon_{t_i}):K_i]=[K(t_i)(\sqrt \epsilon_{t_i}):K(t_i)]$. Let $P_i$ be any smooth $K_i$-point on $Q_{t_i}$ and $H_{P_i}$ be the hyperplane tangent to $Q_{t_i}$ at $P_i$. By \cite[Lemma 2.1]{BT}, we have $Q_{t_i}\cap H_{P_i}=L_{P_i}\cup L_{P_i}^{'}$ for some planes $L_{P_i}$ and $L_{P_i}^{'}$ defined over $K_i(\sqrt\epsilon_{t_i})$. We define $C_{P_i}=Y\cap L_{P_i}$ and $C_{P_i}^{'}=Y\cap L_{P_i}^{'}$. We have, 
\[C_{P_i}=W\cap Q_{t_i}\cap L_{P_i}=W\cap L_{P_i} \textup{ and } C_{P_i}^{'}=W\cap Q_{t_i}\cap L_{P_i}^{'}=W\cap L_{P_i}^{'}\]
for some smooth quadric $W$ in the pencil associated to $Y$. Hence $C_{P_i}$ and $C_{P_i}^{'}$ are conics on $Y$. A different choice of $K_i$-smooth point $\tilde P_i$ on $Q_{t_i}$ leads to two different planes $L_{\tilde P_i}$ and $L_{\tilde P_i}^{'}$, and hence two different conics. Because $Q_{t_i}$ is a singular rank 4 quadric over $\overline K$, it is a cone over a smooth quadric $Q^{''}$ in $\PP^3$. The two families of lines on $Q^{''}$ induce two families of planes on $Q_{t_i}$. So $L_{P_i},L_{P_i}^{'},L_{\tilde P_i}$ and $L_{\tilde P_i}^{'}$ belong to two pencils. So without loss of generality we may assume that $C_{P_i}\sim C_{\tilde{P_i}}$ and $C_{P_i}^{'}\sim C_{\tilde{P_i}}^{'}$. We denote the classes of these conics by $C_i$ and $C_i^{'}$ which are independent of the chosen point on $Q_{t_i}$. By Theorem \ref{Theorem:conics}, the classes of these conics are the 10 possible classes of conics on $Y$ and $C_i^{'}=H-C_{i}$ for every $i\in\{0, \ldots, 4\}$ where $H$ is the hyperplane class of $Y$.

\begin{prop}{\cite[Proposition 2.2]{BT}}
\label{prop:pic}
After possibly interchanging the $C_i$ and the $C_i^{'}$ for some indices $i$, we may assume that the Picard group $\Pic\overline Y\cong\ZZ^6$ is freely generated by the following classes
$$
\frac{1}{2}(H+C_0+C_1+C_2+C_3+C_4),C_0,C_1,C_2,C_3,C_4
$$
where $H$ is the hyperplane class of $Y$.\qed
\end{prop}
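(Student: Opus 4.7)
The plan is to reduce the claim to a direct linear-algebra verification in the standard basis $\{l, e_1, \ldots, e_5\}$ of $\Pic\overline Y$ that comes from writing $\overline Y$ as the blowup of $\PP^2$, to realize all ten conic classes and the proposed six generators concretely in this basis, and to finish with a short determinant computation.

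The first step is to recall, from the preceding discussion and Theorem~\ref{Theorem:conics}, that in the basis $\{l, e_1, \ldots, e_5\}$ the hyperplane class equals $H = 3l - \sum_{j=1}^{5} e_j$, and that the ten conic classes split into five pairs summing to $H$, namely $\{\,l - e_j,\ 2l - \sum_{k\neq j} e_k\,\}$ for $j = 1, \ldots, 5$. Since the five families $\{C_i, C_i'\}$ appearing in the statement form exactly this set of pairs in some ordering, I first relabel the exceptional divisors so that pair $i$ matches the index $e_{i+1}$, and then invoke the interchange freedom permitted in the statement to arrange $C_i = l - e_{i+1}$ for every $i = 0, \ldots, 4$.

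Given this normalization the computation is immediate:
\[
H + \sum_{i=0}^{4} C_i = \bigl(3l - \textstyle\sum_{j=1}^{5} e_j\bigr) + \bigl(5l - \sum_{j=1}^{5} e_j\bigr) = 8l - 2\sum_{j=1}^{5} e_j,
\]
so $\tfrac{1}{2}(H + \sum_i C_i) = 4l - \sum_{j=1}^{5} e_j$ is integral. It then remains to check that the six classes $4l - \sum_j e_j,\ l - e_1,\ \ldots,\ l - e_5$ are a $\ZZ$-basis of $\Pic\overline Y \cong \ZZ^6$. Expressing them as the columns of a matrix $M$ in the basis $(l, e_1, \ldots, e_5)$, the first row of $M$ is $(4,1,1,1,1,1)$, the first column below is $(-1,-1,-1,-1,-1)^T$, and the lower-right $5\times 5$ block is $-I_5$; adding the last five rows to the first row replaces that row with $(-1,0,0,0,0,0)$, after which $\det M = \pm 1$ is immediate.

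The main obstacle, and the only subtlety worth flagging, is that the interchange clause is genuinely needed: if one swaps an odd number of $C_i \leftrightarrow C_i'$ away from the normalization $C_i = l - e_{i+1}$, the coefficient of $l$ in $H + \sum_i C_i$ becomes odd and the purported generator $\tfrac{1}{2}(H + \sum_i C_i)$ fails to lie in $\Pic\overline Y$. Thus the heart of the argument is confirming that the desired normalization is attainable at all, which is immediate from the pairing structure $C_i + C_i' = H$ together with the bijection between pairs and exceptional divisors; everything past that point is routine linear algebra.
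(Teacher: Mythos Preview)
Your argument is correct. The paper itself does not prove this proposition: it is quoted verbatim from \cite[Proposition~2.2]{BT} and closed with a \qed, so there is nothing in the paper to compare against. Your route---realize the ten conic classes in the blowup basis $\{l,e_1,\ldots,e_5\}$ via Theorem~\ref{Theorem:conics}, use the allowed relabeling/interchange to normalize $C_i=l-e_{i+1}$, observe $\tfrac12(H+\sum_i C_i)=4l-\sum_j e_j\in\Pic\overline Y$, and check that the change-of-basis matrix has determinant $\pm 1$---is a clean, self-contained verification. Your side remark that an odd number of interchanges makes the $l$-coefficient of $H+\sum_i C_i$ odd (hence the half non-integral) is also correct and explains why the interchange clause in the statement is not cosmetic.
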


Consider $\sigma\in G_K$, and let $\sigma ^{'}\colon\spec \overline K\rightarrow \spec\overline K$ be the corresponding morphism on schemes. After base changing to $\overline K$ we get
$$
\textup{id}_Y\times \sigma ^{'}\colon \overline Y=Y\times _{K}\spec\overline K\rightarrow \overline Y.
$$
The morphism above induces an automorphism on $\Pic \overline Y$. This defines the action of $G_K$ on $\Aut (\Pic \overline Y)$. This action fixes the canonical class and the intersection multiplicity \cite[Theorem 23.8]{CubicForms}.

Let $\Gamma$ be the graph of ten vertices indexed by $C_i$ and $C_i^{'}$ whose edges join $C_i$ and $C_i^{'}$.  The group $\Aut(\Gamma)$ is the semi-direct product $(\ZZ/2\ZZ)^5\rtimes S_5$ where the $\ZZ/2\ZZ$ entries represent the exchanges for each $\{C_i,C_i^{'}\}$ and the $S_5$ entry represents a permutation of the sets $\{C_i,C_i^{'}\}$ for $i\in\{0,\ldots,4\}$. Let $\curly O(K_Y^{\bot})$ be the subgroup of $\Aut(\Gamma)$ that fixes the orthogonal complement of $K_Y$ in $\Pic\overline Y$. By the discussion in \cite[p:8-10]{KST}, there is a natural embedding $\curly O(K_Y^{\bot})\hookrightarrow \Aut(\Gamma)$ of index 2. Moreover by \cite[p:8-10]{KST} the image of this embedding constitutes of all automorphisms that are the product of an even number of exchanges and an element of $S_5$.

Fix $T\in\curly S$. Let $\Gamma _T$ be subgraph with $2\deg(T)$ vertices indexed by $C_i$ and $C_i^{'}$ for $t_i\in T(\overline K)$.

\begin{prop}{\cite[Proposition 2.3]{BT}}
\label{prop:action}
The action of $G_K$ on $\Pic\overline Y$ induces an action on $\Gamma$ that factors through
$$
\Pi_{T\in\curly S}\Aut(\Gamma_T)\cap\curly O(K_Y^{\bot})\subset\Aut(\Gamma).
$$
Moreover, $G_K$ acts transitively on $\{C_i,C_i^{'}:t_i\in T(\overline K)\}$ if and only if $\epsilon _T\notin K(T)^{\times 2}$.
\end{prop}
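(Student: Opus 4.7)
The plan is to split the proposition into two parts: the factorization statement and the transitivity criterion. For the factorization, membership in $\curly O(K_Y^{\bot})$ is immediate, since any $\sigma\in G_K$ acts through a $K$-morphism on $\overline Y$, so the induced automorphism of $\Pic\overline Y$ preserves the intersection pairing and the canonical class and therefore restricts to an isometry of $K_Y^{\bot}$. To factor through $\prod_{T\in\curly S}\Aut(\Gamma_T)$, I would use that the pencil of quadrics defining $Y$ and its degeneracy subscheme $\curly S\subset\PP^1$ are defined over $K$, so $G_K$ permutes $\curly S(\overline K)$ along Galois orbits; these orbits are exactly the fibers $T(\overline K)$ over the closed points $T\in\curly S$. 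Since the pair $\{C_i,C_i^{'}\}$ is canonically attached to the singular fiber $Q_{t_i}$ (through the two pencils of planes on this rank 4 quadric), $G_K$ must preserve each subgraph $\Gamma_T$.

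For the transitivity criterion, fix a closed point $T\in\curly S$, choose $t\in T(\overline K)$, and set $d=\deg T$. Because $G_K$ acts transitively on the $d$ pairs $\{C_{t_i},C_{t_i}^{'}\}$ with $t_i\in T(\overline K)$, the action on the $2d$-element vertex set of $\Gamma_T$ is transitive if and only if some $\sigma\in G_K$ swaps $C_t$ with $C_t^{'}$. Any such $\sigma$ must stabilize the pair $\{C_t,C_t^{'}\}$ and therefore fix $t$, so it lies in $G_{K(T)}$. Transitivity is thus equivalent to $G_{K(T)}$ acting non-trivially on $\{C_t,C_t^{'}\}$, i.e.\ to $C_t$ not being defined over $K(T)$.

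To identify this with the condition $\epsilon_T\notin K(T)^{\times 2}$, I would invoke the geometric description recalled before the proposition: $Q_T$ is a cone over a smooth quadric $Q^{''}\subset\PP^3$, the two rulings of lines on $Q^{''}$ lift to two pencils of planes through the vertex of $Q_T$, and a plane in one pencil meets $Y$ in a conic of class $C_t$ while a plane in the other pencil gives a conic of class $C_t^{'}$. By the very definition of $\epsilon_T$ as the discriminant of a smooth rank 4 quadric section of $Q_T$, the two rulings are interchanged by the nontrivial element of $\Gal(K(T)(\sqrt{\epsilon_T})/K(T))$ precisely when $\epsilon_T\notin K(T)^{\times 2}$. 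Consequently $G_{K(T)}$ swaps $\{C_t,C_t^{'}\}$ under the same condition. I expect the main obstacle to be cleanly linking the abstract Galois action on Picard classes to the geometric action on the two rulings; this requires invoking \cite[Lemma 2.1]{BT} together with the square-class independence from \cite[Section 3.4.1]{Wit} to verify that the construction of $C_t$ and $C_t^{'}$ through an auxiliary point $P$ and field $K_i$ does not alter the $G_{K(T)}$-action on the classes.
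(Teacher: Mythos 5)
Your proposal is correct and follows essentially the same route as the paper, whose own proof is just two sentences deferring to the preceding discussion: the action lands in $\curly O(K_Y^{\bot})$ because it preserves the canonical class and intersection form, it preserves each $\Gamma_T$ because the pairs $\{C_i,C_i'\}$ are attached to the Galois orbits $T(\overline K)$, and transitivity is read off from the fact that the pair is defined over $K(t_i)$ while each individual conic is defined over $K(t_i,\sqrt{\epsilon_T})$. Your version simply fills in the details (in particular the reduction to whether some element of $G_{K(T)}$ swaps $C_t$ and $C_t'$, and the appeal to \cite[Lemma 2.1]{BT} for the rulings), which the paper leaves implicit.
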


\begin{proof}
By the discussion above, the action of $G_K$ factors through $\Pi_{T\in\curly S}\Aut(\Gamma_T)\cap\curly O(K_Y^{\bot})$. Moreover, by definition the pair $\{C_i,C_i^{'}\}$ is defined over $K(t_i)$ and each individual conic over $K(t_i,\sqrt{\epsilon_{T}})$. So $\epsilon_T\notin K(T)^{\times 2}$ is exactly the condition for a transitive action of $G_K$ on $\{C_i,C_i^{'}\}$.
\end{proof}

\subsection{An algorithm to relate $\Ho^1(K,\Pic\overline Y)$ and $\Br Y/\Br K$}
Throughout this section, we assume that there is a subscheme $\curly T\subset \curly S$ that satisfies the conditions
$$
(*)\hspace{5ex}\deg(\curly T)=2,~~\Pi_{T\in\curly T}N_{K(T)/K}(\epsilon_T)\in K^{\times 2},\textup{and   }\epsilon_T\notin K(T)^{\times 2}\textup{ for every }T\in\curly T.\hspace{5ex}
$$
Because $\curly T$ satisfies (*), Lemma \cite[Lemma 3.1]{BT} allows us to assume that $\epsilon_T\in \im(K^{\times}/K^{\times 2}\rightarrow K(T)^{\times}/K(T)^{\times 2})$. Define $K_{\curly T}:=K(\sqrt{\epsilon_T})$ which is independent of $T$ because $\deg(\curly T)=2$ and $\Pi_{T\in\curly T}N_{K(T)/K}(\epsilon_T)\in K^{\times 2}$. By Lemma \cite[Lemma 3.1]{BT} and since $\epsilon_T\notin K(T)^{\times 2}$, $K_{\curly T}$ is a quadratic extension of $K$.

\begin{prop}
\label{prop:H1}
The cocycle in $\Ho^1(K,\Pic \overline Y)$ given by
$$
\sigma\mapsto\begin{cases}-H+\Sigma _{t_i\in\curly T(\overline K)}C_i & \textup{ if }\sigma\notin G_{K_{\curly T}}\\ 0 & \textup{ otherwise}\end{cases}
$$
is non trivial if and only if there exists $T\in\curly S-\curly T$ such that $\epsilon_{T}\notin K(T)^{\times 2}$.
\end{prop}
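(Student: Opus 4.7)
The plan is first to verify that $\phi$ is a $1$-cocycle and then to construct, or obstruct, a trivializing coboundary according to the hypothesis on $\curly S - \curly T$. For the cocycle check, Proposition~\ref{prop:action} implies that any $\sigma \in G_{K_{\curly T}}$ permutes the set $\{C_i : t_i \in \curly T(\overline K)\}$ without ever exchanging $C_i$ with $C_i'$, and it fixes the hyperplane class $H$; hence $\sigma$ fixes the class $-H + \sum_{t_i \in \curly T(\overline K)} C_i$. A short case analysis on whether $\sigma,\tau,\sigma\tau$ each lies in $G_{K_{\curly T}}$ then verifies the cocycle relation $\phi(\sigma\tau)=\phi(\sigma)+\sigma\phi(\tau)$, so $\phi \in Z^1(K,\Pic \overline Y)$.

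For the ``if'' direction, assume $\epsilon_T \in K(T)^{\times 2}$ for every $T \in \curly S - \curly T$. I would propose the explicit class
\[
D := -\frac{1}{2}\Bigl(H + \sum_{i=0}^{4} C_i\Bigr) \in \Pic \overline Y,
\]
which lies in the Picard lattice by Proposition~\ref{prop:pic}. Under this hypothesis, Proposition~\ref{prop:action} precludes every exchange $C_j \leftrightarrow C_j'$ for $t_j \in (\curly S - \curly T)(\overline K)$, while exchanges at $t_i \in \curly T(\overline K)$ occur exactly when $\sigma \notin G_{K_{\curly T}}$. Using $C_i' = H - C_i$ together with the observation that $\sum_{i=0}^{4} C_i$ is invariant under permutations of indices, a direct computation yields $\sigma D - D = \phi(\sigma)$ for every $\sigma \in G_K$, trivializing the cocycle.

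For the ``only if'' direction I would argue by contrapositive. Suppose some $T_0 \in \curly S - \curly T$ has $\epsilon_{T_0} \notin K(T_0)^{\times 2}$, and for contradiction write $\phi = \delta D$ with $D = m \cdot \tfrac{1}{2}(H + \sum_i C_i) + \sum_i n_i C_i$ and $m, n_i \in \ZZ$. Using Proposition~\ref{prop:action}, I would pick $\sigma_0 \in G_K$ exchanging $C_{j_0} \leftrightarrow C_{j_0}'$ for some $t_{j_0} \in T_0(\overline K)$; expanding $\sigma_0 D - D$ in the basis of Proposition~\ref{prop:pic} and matching the $C_{j_0}$-coefficient to that of $\phi(\sigma_0)$ gives $m + 2 n_{j_0} = 0$. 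A parallel computation with $\sigma_1 \notin G_{K_{\curly T}}$, which necessarily exchanges $C_{i_1} \leftrightarrow C_{i_1}'$ with $\curly T(\overline K) = \{t_{i_1}, t_{i_2}\}$, gives $m + 2 n_{i_1} = -1$. Subtracting yields $2(n_{j_0} - n_{i_1}) = 1$, a contradiction in $\ZZ$.

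The principal obstacle is the sub-case in which $\sqrt{\epsilon_{T_0}}$ already lies in $K_{\curly T}(T_0)$, because then every element exchanging at $T_0$ is forced to also exchange at $\curly T$, so $\sigma_0$ and $\sigma_1$ cannot be decoupled. I would resolve this by applying the cocycle equation directly to such a twinned element $\sigma$: the expansion of $\sigma D - D$ collects contributions from both $\curly T$ and $T_0$, and since $\phi(\sigma)=-H + \sum_{t_i \in \curly T(\overline K)} C_i$ has no $C_{j_0}$-term, the $C_{j_0}$-coefficient still forces $m + 2 n_{j_0} = 0$, while the $C_{i_1}$-coefficient forces $m + 2 n_{i_1} = -1$, producing the same impossible integer equation. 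Extra care is needed when $\curly T$ or $T_0$ is a single closed point of degree $2$, but the symmetry of the Galois action on the corresponding index pairs forces $n_{i_1} = n_{i_2}$ (and analogously for $T_0$), reducing to the same contradiction.
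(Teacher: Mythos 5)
Your proof is correct, and it reaches the conclusion by a genuinely different route from the paper's. The paper runs the statement through the connecting homomorphism of $0\to\Pic\overline Y\xrightarrow{\times 2}\Pic\overline Y\to\Pic\overline Y/2\Pic\overline Y\to 0$, identifies the cocycle as the image of the class of $\Sigma_{t_i\in\curly T(\overline K)}C_i$ in $(\Pic\overline Y/2\Pic\overline Y)^{G_K}/((\Pic\overline Y)^{G_K}/2(\Pic\overline Y)^{G_K})$, and reduces triviality to the lattice condition $\Sigma_{t_i\in\curly T(\overline K)}C_i\in 2\Pic\overline Y+(\Pic\overline Y)^{G_K}$, which it then settles using the transitivity criterion of Proposition~\ref{prop:action}. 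You instead stay entirely at the level of coboundaries: in the trivial direction you exhibit the explicit trivializing class $D=-\tfrac{1}{2}(H+\Sigma_iC_i)$ (a pleasant simplification --- the computation $\sigma D-D=-H+\Sigma_{t_i\in\curly T(\overline K)}C_i$ for $\sigma\notin G_{K_{\curly T}}$ does check out, since under the hypothesis no exchanges occur outside $\curly T$), and in the other direction you derive the parity contradiction $m+2n_{j_0}=0$ versus $m+2n_{i_1}=-1$ on the coefficient $m$ of the half-integral generator, which is the same bookkeeping the paper performs when it rules out an odd coefficient of $\tfrac{1}{2}(H+\Sigma_iC_i)$ in a $G_K$-fixed class. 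Your approach buys a more self-contained and explicit argument (no Kummer-type isomorphism needed); the paper's buys the identification with the $2$-torsion of $\Ho^1$, which it reuses elsewhere. Two small points of care: the clean relations $m+2n_{j_0}=0$ and $m+2n_{i_1}=-1$ do not follow from the $C_{j_0}$- and $C_{i_1}$-coefficients alone but require combining them with the coefficient of $\tfrac{1}{2}(H+\Sigma_iC_i)$ in $\sigma D-D$ (after that substitution they come out exactly as you state, including in the ``twinned'' case); and the appeal to symmetry to force $n_{i_1}=n_{i_2}$ is unnecessary, since $D$ is an arbitrary lattice element and the contradiction already follows without it.
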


\begin{proof}
This proof can be found in \cite[Section 3.2, Proposition 3.3]{BT} assuming that $Q_T$ has a smooth $K(T)$-point for all $T\in\curly T$. However the proof works without the assumption that $Q_T$ has a smooth $K(T)$-point for all $T\in\curly T$. We repeat the proof for the reader's convenience.

The long exact sequence on cohomology associated to the short exact sequence
$$
0\rightarrow\Pic\overline Y\xrightarrow{\times 2}\Pic \overline Y\rightarrow (\Pic\overline Y/2\Pic\overline Y)\rightarrow 0
$$
induces an isomorphism
\begin{equation}
\label{piciso1}
\frac{(\Pic\overline Y/2\Pic\overline Y)^{G_K}}{((\Pic \overline Y)^{G_K}/2(\Pic\overline Y)^{G_K})}\xrightarrow{\sim}\Ho^1(K,\Pic\overline Y)[2];\quad D\mapsto (\sigma\mapsto\frac{1}{2}(d-\sigma d))
\end{equation}
where $d$ is a lift of $D$ to $\Pic\overline Y$.

We prove that the divisor $\Sigma _{t_i\in\curly T(\overline K)}C_i\in(\Pic\overline Y/2\Pic\overline X)^{G_K}$ and that its image of under the isomorphism \eqref{piciso1}, is the cocycle $\alpha\colon G_K\rightarrow \Pic\overline Y$ defined by
\begin{equation}
\label{eq:fixed}
\sigma\mapsto\begin{cases}-H+\Sigma _{t_i\in\curly T(\overline K)}C_i & \textup{ if }\sigma\notin G_{K_{\curly T}}\\ 0 & \textup{ otherwise}.\end{cases}
\end{equation}
By definition, for every $t_i\in\curly S(\overline K)$ the pair of conics $\{C_i,C_i^{'}\}$ is defined over $K(t_i)$ and each individual conic is defined over $K(t_i,\sqrt{\epsilon_{t_i}})$. Hence $\Sigma _{t_i\in\curly T(\overline K)}C_i$ is fixed by $\sigma$ for every $\sigma\in G_{K_{\curly T}}$. Furthermore, if $\sigma\notin G_{K_{\curly T}}$ then 
$$\Sigma _{t_i\in\curly T(\overline K)}C_i-\sigma(\Sigma _{t_i\in\curly T(\overline K)}C_i)=\Sigma _{t_i\in\curly T(\overline K)}C_i-\Sigma _{t_i\in\curly T(\overline K)}C_i^{'}=-2H+2\Sigma _{t_i\in\curly T(\overline K)}C_i.$$
By the previous two statements, $\Sigma _{t_i\in\curly T(\overline K)}C_i\in(\Pic\overline Y/2\Pic\overline Y)^{G_K}$. 
Moreover, by the explicit description of ismorphism \eqref{piciso}, $\alpha\in \Ho^1(K,\Pic \overline Y)$ as defined above is the image of $\Sigma _{t_i\in\curly T(\overline K)}C_i$ by the isomorphism \eqref{piciso1}. 

Therefore $\alpha$ is trivial if and only if $\Sigma _{t_i\in\curly T(\overline K)}C_i\in 2\Pic\overline Y+(\Pic\overline Y)^{G_K}$. We will prove that $\Sigma _{t_i\in\curly T(\overline K)}C_i\notin 2\Pic\overline Y+(\Pic\overline Y)^{G_K}$ if and only if there exists $T\in\curly S-\curly T$ such that $\epsilon_{T}\notin K(T)^{\times 2}$. We determine an equivalent criterion to $\Sigma _{t_i\in\curly T(\overline K)}C_i\notin 2\Pic\overline Y+(\Pic\overline Y)^{G_K}$ by using the generators of $\Pic\overline Y$. By Equation \eqref{eq:fixed}, $\Sigma _{t_i\in\curly T(\overline K)}C_i\notin(\Pic\overline Y)^{G_K}$. Moreover, by the same argument as in the proof of Lemma \ref{lem:H1} any combination of the generators of $\Pic\overline Y$ involving an odd coefficient of $\frac{H+\Sigma_iC_i}{2}$ is not fixed by $G_K$. Therefore $\Sigma _{t_i\in\curly T(\overline K)}C_i\in 2\Pic\overline Y+(\Pic\overline Y)^{G_K}$ if and only if there exists a choice of signs such that 
$$
\Sigma _{t_i\in(\curly S-\curly T)(\overline K)}\pm C_i\in(\Pic \overline Y)^{G_K}.
$$
If there exists $T\in\curly S-\curly T$ such that $\epsilon_{T}\notin K(T)^{\times 2}$ then by Proposition \ref{prop:action} $G_K$ acts transitively on each pair $\{C_i,C_i^{'}:t_i\in T(\overline K)\}$. Then for some $t_j\in T(\overline K)$ and by using the fact that $C_j^{'}=H-C_j$, there exists $\sigma\in G_K$ such that
$$
\sigma(\pm C_j+\Sigma_{\{t_i\neq t_j, t_i\in(\curly S-\curly T)(\overline K)\}}\pm C_i)=\mp C_j+D
$$
where $D$ is a linear combination of the $C_i$'s excluding $C_j$. Then for any choice of signs
$$
\Sigma _{t_i\in(\curly S-\curly T)(\overline K)}\pm C_i\notin(\Pic \overline Y)^{G_K}.
$$
This proves that $\alpha$ is a non trivial cocycle in $\Ho^1(K,\Pic\overline Y)$.

Conversely, if for every $T\in\curly S-\curly T$ we have $\epsilon_T\in K(T)^{\times 2}$ then for any choice of signs
$$
\Sigma _{t_i\in(\curly S-\curly T)(\overline K)}\pm C_i\in(\Pic \overline Y)^{G_K}.
$$
So $\alpha$ is trivial in $\Ho^1(K,\Pic\overline Y)$.
\end{proof}

Let $V$ be a nice scheme over a field $K$. Let $L$ be a cyclic extension of $K$; denote by $\sigma$ the generator of $\Gal(L/K)$. Let $f$ be any element in $K(V)^{\times}$. We denote by $\Br_{cyc}(V,L)$ the set of classes of algebras $[(L/K,f)]$ in the image of $\Br V/\Br_0V\rightarrow \Br K(V)/\Br_0V$ where $\Br_0V=\im(\Br K\rightarrow\Br V)$. We view $1-\sigma$ as endomorphisms of $\textup{Div}V_L$. We let $N_{L/K}\colon\textup{Div}V_L\rightarrow\textup{Div}V_K$ and $\overline N_{L/K}\colon\Pic V_L\rightarrow\Pic V_K$ be the usual norm maps.

\begin{thm}
\label{thm:brcyc}
There exists an injection $\Br_{cyc}(V,L)\hookrightarrow\Ho^1(K,\Pic\overline V)$ given by the composition of the maps
\begin{equation}
\Br_{cyc}(V,L)\xrightarrow{\sim}\frac{\ker(\overline{N}_{L/K})}{\im(1-\sigma)}\hookrightarrow \Ho^1(K,\Pic\overline V).
\end{equation}
The image of the class $[(L/K,f)]$ under the above composition is the cocycle 
$$
\sigma\mapsto\begin{cases}D & \textup{ if }\sigma\notin G_{L}\\ 0 & \textup{ otherwise}\end{cases},
$$
where $D$ is the divisor such that $(f)=N_{L/K}(D)$.\qed
\end{thm}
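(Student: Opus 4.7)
The plan is to factor the claimed embedding as the composition written in the statement and to prove the two pieces in turn. For the first map $\Br_{cyc}(V,L) \xrightarrow{\sim} \ker(\overline N_{L/K})/\im(1-\sigma)$, I would proceed as follows. Given a class $[(L/K,f)]$ coming from $\Br V$, the hypothesis that it extends unramified to $V$ forces the divisor $(f) \in \Div V_K$ to lie in the image of $N_{L/K}\colon \Div V_L \to \Div V_K$, so one writes $(f) = N_{L/K}(D)$ for some $D \in \Div V_L$ and assigns the element $[D] \in \Pic V_L$. This class lies in $\ker(\overline N_{L/K})$ since $N_{L/K}(D) = (f)$ is principal. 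Well-definedness modulo $\im(1-\sigma)$ amounts to two checks: replacing $f$ by $f \cdot N_{L/K}(h)$ for $h \in L(V)^{\times}$ replaces $D$ by $D + (h)$, leaving $[D]$ unchanged; and two lifts $D, D^{'}$ of the same $f$ differ by an element of $\ker(N_{L/K}\colon \Div V_L \to \Div V_K)$, which lies in $\im(1-\sigma)$ at the level of divisors since $\Div V_L$ is an induced $\Gal(L/K)$-module (apply Shapiro's lemma together with Hilbert 90). The inverse sends $[D]$ with $N_{L/K}(D) = (f)$ back to $[(L/K,f)]$; routine checks show these assignments are mutually inverse and that $[(L/K,f)] \in \Br_0 V$ precisely when $[D] \in \im(1-\sigma)$.

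Next I would pass to Galois cohomology via inflation. Since $\Gal(L/K) = \langle \sigma \rangle$ is cyclic, Tate periodicity identifies
\[
\frac{\ker(\overline N_{L/K})}{\im(1-\sigma)} \;=\; \widehat{\Ho}^{-1}(\Gal(L/K), \Pic V_L) \;\cong\; \Ho^1(\Gal(L/K), \Pic V_L),
\]
with the cocycle determined by sending $\sigma$ to $[D]$. Because $V$ is nice, one has $\Pic V_L = (\Pic \overline V)^{G_L}$ (via the low-degree terms of the Hochschild-Serre spectral sequence for $V$), and the five-term exact sequence then yields an injection $\Ho^1(\Gal(L/K), \Pic V_L) \hookrightarrow \Ho^1(K, \Pic \overline V)$. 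Unwinding inflation, the composed cocycle on $G_K$ sends $\sigma \mapsto [D]$ whenever $\sigma \notin G_L$ and to $0$ otherwise, matching the formula in the statement.

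The main obstacle is the Hilbert 90 / Shapiro step guaranteeing well-definedness and injectivity of the first map modulo $\im(1-\sigma)$, together with the verification that $\Pic V_L = (\Pic \overline V)^{G_L}$ under the niceness hypothesis, which is what makes inflation injective. Once these two inputs are in place the rest is bookkeeping: Tate periodicity identifies the quotient with group cohomology, inflation-restriction supplies injectivity into the absolute Galois cohomology, and unwinding the inflation map produces the explicit cocycle formula.
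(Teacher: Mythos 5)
Your proposal follows the same three-step factorization as the paper's proof: the identification of $\Br_{cyc}(V,L)$ with $\ker(\overline N_{L/K})/\im(1-\sigma)$ (which the paper simply cites from \cite{VA}, Theorem 3.3, whereas you sketch the argument directly), the identification of that quotient with $\Ho^1(\Gal(L/K),\Pic V_L)$ via the explicit resolution for a cyclic group, and the inflation map from inflation--restriction. The approach is essentially identical and correct; your additional observation that one needs $\Pic V_L=(\Pic\overline V)^{G_L}$ for the inflation step to land injectively in $\Ho^1(K,\Pic\overline V)$ is a point the paper passes over silently, so it is a welcome extra precision rather than a divergence.
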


\begin{proof}
The first isomorphism follows from \cite[Theorem 3.3]{VA}. By \cite[Theorem 3.3]{VA}, the isomorphism maps the class of the algebra $[(L/K,f)]$ to the divisor $D$ such that $$N_{L/k}(D)=(f).$$ 

The extension $L/K$ is cyclic; so by using the explicit resolution we compute 
$$\frac{\ker(\overline N_{L/K})}{\im(1-\sigma)}\simeq\Ho^1(\Gal(L/K),\Pic V_L).$$
The image of $D$ under this isomorphism is the cocycle $\alpha$ that maps $\sigma\mapsto D$.

Furthermore, the first part of the inflation-restriction exact sequence yields the injection
$$
\Ho^1(\Gal(L/K),\Pic V_L)\hookrightarrow \Ho^1(K,\Pic\overline V).
$$
The image of $\alpha$ under the above inflation map is the required cocycle.
\end{proof}

Applying the map in Theorem \ref{thm:brcyc} to the del Pezzo surface $Y$ and the cyclic extension $K_{\curly T}/K$, we get the map
\begin{equation}
\label{eq:map}
\Br_{cyc}(Y,k_{\curly T})\xrightarrow{\sim}\frac{\ker(\overline N_{K_{\curly T}/K})}{\im(1-\sigma)}\hookrightarrow \Ho^1(K,\Pic\overline Y).
\end{equation}

For the remainder of this section, we assume that for every subscheme $\curly T\subset \curly S$ satisfying $(*)$, the quadric $Q_T$ has a smooth $K(T)$-point for every $T\in\curly T$. Let 
$$
\curly A_{\curly T}:=\left(K_{\curly T}/K,l^{-2}\Pi_{T\in\curly T}\textup{N}_{K(T)/K}(l_T)\right),
$$
where $l_T$ is a $K(T)$-linear form such that the associated hyperplane is tangent to $Q_T$ at a smooth point for every $T\in\curly T$ and $l$ is any linear form.
\begin{prop}
\label{prop:algebra}
Let $\curly T\subset \curly S$ satisfy $(*)$. The cyclic algebra $\curly A_{\curly T}$
is in the image of $\Br Y\rightarrow \Br K(Y)$.
Further, if there exists $T\in\curly S-\curly T$ such that $\epsilon_{T}\notin K(T)^{\times 2}$ then the algebra $\curly A_{\curly T}$ is non trivial. In particular, it maps under the map \ref{eq:map} to the nontrivial cocycle 
$$
\sigma\mapsto\begin{cases}-H+\Sigma _{t_i\in\curly T(\overline K)}C_i & \textup{ if }\sigma\notin G_{K_{\curly T}}\\ 0 & \textup{ otherwise}\end{cases}
$$
in $\Ho^1(K,\Pic\overline Y)$.
\end{prop}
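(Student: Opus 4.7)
The plan is to build an explicit divisor $D$ on $Y_{K_{\curly T}}$ whose norm to $Y$ equals the divisor of $f := l^{-2}\Pi_{T\in\curly T}N_{K(T)/K}(l_T)$. By Theorem \ref{thm:brcyc}, such a $D$ simultaneously places $\curly A_{\curly T}$ in $\Br_{cyc}(Y, K_{\curly T})$ (so it lifts to $\Br Y$) and identifies its image under \eqref{eq:map} as the cocycle $\sigma \mapsto [D]$ for $\sigma \notin G_{K_{\curly T}}$. Once $[D] = -H + \sum_{t_i \in \curly T(\ovl K)} C_i$ in $\Pic \ovl Y$ is verified, nontriviality under the arithmetic hypothesis follows from Proposition \ref{prop:H1}.

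I would take $D := -(l)|_Y + \sum_{T \in \curly T} D_T$, building each $D_T$ from the smooth $K(T)$-point $P_T$. By \cite[Lemma 2.1]{BT}, the tangent hyperplane $H_{P_T}$ cuts $Y$ in a divisor that splits as $C_{P_T} + C_{P_T}^{'}$ over $K(T)(\sqrt{\epsilon_T})$. When $\deg T = 1$ we have $K(T)(\sqrt{\epsilon_T}) = K_{\curly T}$, so I set $D_T := C_{P_T}$; its Galois conjugate is $C_{P_T}^{'}$, giving $N_{K_{\curly T}/K}(D_T) = (l_T)|_Y$. When $\deg T = 2$, condition $(*)$ together with $\epsilon_T \notin K(T)^{\times 2}$ forces $K(T) \ne K_{\curly T}$, and I take $D_T$ to be the descent to $K_{\curly T}$ of $C_{P_i} + C_{P_j}$ for $\{t_i, t_j\} = T(\ovl K)$. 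The descent is valid because any $\tau \in G_{K_{\curly T}}$ swapping $t_i$ and $t_j$ fixes $\sqrt{\epsilon_T}$; by Proposition \ref{prop:action} together with the parity constraint from $\curly O(K_Y^{\bot})$, such $\tau$ acts on $\Gamma_T$ by the transposition $C_{P_i} \leftrightarrow C_{P_j}$, $C_{P_i}^{'} \leftrightarrow C_{P_j}^{'}$, preserving the sum. A direct computation over $\ovl K$ then gives $N_{K_{\curly T}/K}(D_T) = (C_{P_i} + C_{P_i}^{'}) + (C_{P_j} + C_{P_j}^{'})$, matching $(N_{K(T)/K}(l_T))|_Y$.

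The class identification $[D] = -H + \sum_{t_i \in \curly T(\ovl K)} C_i$ follows from $[(l)|_Y] = H$ and $[C_{P_i}] = C_i$, after possibly interchanging $C_i \leftrightarrow C_i^{'}$ as permitted by Proposition \ref{prop:pic}. The hardest step is the descent in the degree-2 case: one must rule out the alternative action $C_{P_i} \leftrightarrow C_{P_j}^{'}$, $C_{P_i}^{'} \leftrightarrow C_{P_j}$, which would destroy invariance of $C_{P_i} + C_{P_j}$. This is handled by normalizing $\epsilon_T$ to lie in $K$ via \cite[Lemma 3.1]{BT}, so that $\sqrt{\epsilon_{t_i}} = \sqrt{\epsilon_{t_j}} = \sqrt{\epsilon_T}$ inside $\ovl K$; then fixing $\sqrt{\epsilon_T}$ forces both conic-pair exchanges to be trivial, leaving only the $t_i \leftrightarrow t_j$ swap. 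With Galois-equivariance secured, the remaining work is routine bookkeeping with norms and divisor classes.
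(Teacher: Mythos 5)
Your proposal is correct, but it organizes the argument differently from the paper. The paper splits the two claims: it proves $\curly A_{\curly T}\in\im(\Br Y\to\Br K(Y))$ by a direct residue computation --- the only codimension-one points where $l^{-2}\Pi_{T\in\curly T}N_{K(T)/K}(l_T)$ has odd valuation are the conics $C_T$ and $C_T^{'}$ for $T\in\curly T$, and since these are conjugate over $K_{\curly T}$ the residue field of $C_T\cup C_T^{'}$ contains $K_{\curly T}$, so \cite[7.5.1]{CSA} forces the residues to vanish --- and only then identifies the cocycle by observing that any $D$ with $N_{K_{\curly T}/K}(D)=\operatorname{div}(f)=-2H+\Sigma C_i+\Sigma C_i^{'}$ ``can be chosen'' to be $-H+\Sigma C_i$. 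You instead exhibit an explicit norm divisor $D$ on $Y_{K_{\curly T}}$ and invoke the surjectivity direction of Theorem \ref{thm:brcyc} (i.e.\ \cite[Theorem 3.3]{VA}) to get unramifiedness and the cocycle in one stroke. Your route buys a genuinely more explicit justification of the step the paper leaves terse --- you actually verify that the class $-H+\Sigma_{t_i\in\curly T(\overline K)}C_i$ is realized by a divisor whose norm is exactly $\operatorname{div}(f)$, including the degree-two case --- at the cost of the descent bookkeeping, which the paper's residue argument sidesteps entirely. One caveat: in the degree-two descent, the appeal to ``the parity constraint from $\curly O(K_Y^{\bot})$'' is not quite the right tool, since that constraint governs the total number of exchanges over all of $\curly S$, not the action on a single $\Gamma_T$; the correct justification is the one you give in your final paragraph, namely that after normalizing $\epsilon_T\in K^{\times}$ the four geometric conics over $T$ form the $G_K$-set $\Hom_K(K(T)\otimes_K K_{\curly T},\overline K)$, so an element of $G_{K_{\curly T}}$ can only permute the $K(T)$-factor and never performs an exchange $C_{P_i}\leftrightarrow C_{P_i}^{'}$. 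With that substitution your argument is complete and consistent with the paper's conclusion.
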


\begin{proof}
To prove that $\curly A_{\curly T}\in\im(\Br X\rightarrow\Br K(Y))$, we prove that $\curly A_{\curly T}$ is unramified at every codimension 1 point $x\in Y^{(1)}$, i.e., $\partial_x(\curly A)=0$ by the residue sequence, \cite{Gro1,Gro2,Gro3}. The prime divisors which correspond to a valuation such that the function $l^{-2}\Pi_{T\in\curly T}\textup{N}_{K(T)/K}(l_T)$ has an odd valuation at are $C_T$ and $C_T^{'}$ for every $T\in\curly T$. However, for every $T\in\curly T$, $K_{\curly T}\subset \kappa (C_T\cup C_T^{'})$ because $C_T$ and $C_T^{'}$ are conjugate over $K_{\curly T}$. So by \cite[7.5.1]{CSA}, $\curly A$ is unramified at $C_T$ and $C_T^{'}$ for every $T\in\curly T$. Hence $\curly A\in\im(\Br Y\rightarrow \Br K(Y))$ by the residue sequence, \cite{Gro1,Gro2,Gro3}.

By definition of the maps defining \ref{eq:map}, the class of the algebra $\curly A_{\curly T}$ gets mapped to a cocycle $\alpha$ that maps $G_{K_{\curly T}}$ to the identity and maps any element $\sigma\notin G_{K_{\curly T}}$ to a divisor $D$ such that $\textup{N}_{K_{\curly T}/K}(D)=\textup{div}(l^{-2}\Pi_{T\in\curly T}\textup{N}_{K(T)/K}(l_T))=-2H+\Sigma _{t_i\in\curly T(\overline K)}C_i+\Sigma _{t_i\in\curly T(\overline K)}C_i^{'}$. So $D$ can be chosen as $-H+\Sigma _{t_i\in\curly T(\overline K)}C_i$. The cocycle $\alpha$ is non trivial by Proposition \ref{prop:H1}. So $\curly A_{\curly T}$ is non trivial as well.
\end{proof}

\section{An example of a del Pezzo surface of degree 4 with trivial Brauer Group and non trivial $\Ho^1(k,\Pic\overline X)$}

In this section we prove the main theorem.

\begin{thm}
\label{thm:main}
Let $k=\QQ^{cycl}(a,b,c)$ where $a,b$ and $c$ are independent transcendental elements. Let $X$ be the del Pezzo surface of degree 4 in $\PP^4_k$ defined by the intersection of the following two quadrics
\begin{equation*}
\begin{split}
Q&:ax_0^2+bx_1^2+x_2^2+cx_4^2=0\\
Q^{'}&:bcx_0^2+x_1^2+x_2^2+ax_3^2=0.
\end{split}
\end{equation*}
Then $\Ho^1(k,\Pic \overline X)\simeq\ZZ/2\ZZ$ while $\frac{\Br X}{\Br k}$ is trivial.
\end{thm}
The following corollary follows from the algorithm in \cite[Section 4.4]{BT}.

\begin{cor}
The assumption that $Q_T$ has a smooth $k(T)$-point for every $T\in\curly T$ in \cite[Section 4.4]{BT} cannot be omitted entirely to apply the algorithm for computing $\Br X/\Br k$.\qed
\end{cor}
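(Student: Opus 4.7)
The proof breaks into two main components.

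For $\Ho^1(k,\Pic\overline X)$, I would begin by writing down the pencil. Because $Q$ and $Q'$ are simultaneously diagonal, the pencil matrix equals $\diag(\lambda a+\mu bc,\,\lambda b+\mu,\,\lambda+\mu,\,\mu a,\,\lambda c)$, so the degeneracy locus $\curly S$ consists of five $k$-rational points $t_0,\dots,t_4$ obtained by zeroing each diagonal entry. Computing each $\epsilon_{t_i}$ as the product (mod squares) of the remaining four diagonal entries, I find $\epsilon_{t_3}=\epsilon_{t_4}=abc$, while $\epsilon_{t_0},\epsilon_{t_1},\epsilon_{t_2}$ each contain an irreducible polynomial factor in $a,b,c$ and are therefore non-squares in $k$; for instance $\epsilon_{t_0}=b(b^2c-a)(bc-a)$ mod squares. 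Setting $\curly T=\{t_3,t_4\}$ satisfies condition $(*)$ (the norm product equals $(abc)^2$) and, because $\epsilon_{t_0}\notin k^{\times2}$, Proposition \ref{prop:H1} produces a nontrivial class $\alpha\in\Ho^1(k,\Pic\overline X)$. For the upper bound $\Ho^1(k,\Pic\overline X)\le\ZZ/2\ZZ$, I would use the isomorphism from the proof of Proposition \ref{prop:H1} together with the observation that, since all five $t_i$ are $k$-rational, the $G_k$-action on $\Pic\overline X$ factors through exchanges alone; a direct expansion shows $\prod_i\epsilon_{t_i}$ is a square in $k$ (using that $-1$ is a square in $\QQ^{cycl}$), so the image of $G_k$ lands in the index-$2$ subgroup $\curly O(K_X^\perp)$ predicted by Proposition \ref{prop:action}, and one then computes by direct inspection that $(\Pic\overline X/2)^{G_k}/((\Pic\overline X)^{G_k}/2(\Pic\overline X)^{G_k})$ has order exactly $2$.

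For the triviality of $\Br X/\Br k$, the injection $\Br X/\Br k\hookrightarrow\Ho^1(k,\Pic\overline X)=\ZZ/2\ZZ$ reduces everything to showing that $\alpha$ is not in the image. Following Harpaz, I would base extend to $L=k(\sqrt a)$; over $L$, $Q_{t_3}$ and $Q_{t_4}$ acquire the smooth $L$-points $(1{:}0{:}\sqrt{-a}{:}0{:}0)$ and $(0{:}\sqrt{-a}{:}0{:}1{:}0)$ respectively, so the hypothesis of Proposition \ref{prop:algebra} is now satisfied. The corresponding tangent linear forms are $l_{t_3}=ax_0+\sqrt{-a}\,x_2$ and $l_{t_4}=\sqrt{-a}\,x_1+ax_3$, and Proposition \ref{prop:algebra} yields the explicit Azumaya algebra $\curly A=(L(\sqrt{bc})/L,\,l^{-2}l_{t_3}l_{t_4})$ representing the nontrivial class of $\Br X_L/\Br L\cong\ZZ/2\ZZ$ (noting that $abc$ becomes $bc$ mod squares in $L$). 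Tracing through the cocycle description, the restriction map $\Ho^1(k,\Pic\overline X)\to\Ho^1(L,\Pic\overline X)$ sends $\alpha$ to the class of $\curly A$, so any hypothetical lift $\curly B\in\Br X$ of $\alpha$ must satisfy $\curly B|_{X_L}\equiv\curly A\pmod{\Br L}$.

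The main obstacle is then ruling out such a lift. Because $\alpha$ is cyclic (inflated from $\Gal(k(\sqrt{abc})/k)$), any such $\curly B$ can be written as a cyclic algebra $(k(\sqrt{abc})/k,g)$ with $g\in k(X)^\times$, and the compatibility over $L$ forces $g\cdot(l^{-2}l_{t_3}l_{t_4})^{-1}\in L^\times\cdot N_{L(\sqrt{bc})(X)/L(X)}\!\left(L(\sqrt{bc})(X)^\times\right)$. The plan is to exploit the action of $\tau\in\Gal(L/k)$: applying $\tau$ to both sides and using $\tau(g)=g$ reduces the question to whether the ratio $\tau(l_{t_3}l_{t_4})/(l_{t_3}l_{t_4})$ lies in $\{\tau(h)/h : h\in L(\sqrt{bc})(X)^\times\}\cdot\tau(L^\times)/L^\times$ (modulo the image of a suitable norm). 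A direct computation gives $l_{t_3}l_{t_4}=a(\sqrt{-a}\,u+v)$ with $u=x_0x_1+x_2x_3$ and $v=ax_0x_3-x_1x_2$, so the ratio equals $(v-\sqrt{-a}\,u)/(v+\sqrt{-a}\,u)$. The technical heart of the argument — and the principal obstacle — is showing that this explicit $\sqrt{-a}$-dependent ratio cannot be expressed in the form required, which is precisely where the algebraic independence of $a,b,c$ over $\QQ^{cycl}$ is essential; this is the step for which Harpaz's framework provides the key tool.
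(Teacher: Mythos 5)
Your setup — the degeneracy locus, the discriminants (your $t_3,t_4$ are the paper's $T_0,T_1$), the choice $\curly T$ with $\epsilon\sim abc$, the nontriviality of $\Ho^1(k,\Pic\overline X)$ via Proposition \ref{prop:H1}, the base change to $L=k(\sqrt a)$ where the two quadrics acquire smooth points, and the explicit cyclic algebra $\curly A$ from Proposition \ref{prop:algebra} — all match the paper's strategy, and your tangent forms and the identity $l_{t_3}l_{t_4}=a(\sqrt{-a}\,u+v)$ check out. But the argument is not complete: you end by reducing the descent question to whether the ratio $(v-\sqrt{-a}\,u)/(v+\sqrt{-a}\,u)$ lies in a certain norm subgroup of $L(\sqrt{bc})(X)^\times$ modulo constants, and you explicitly defer this ``principal obstacle'' to ``Harpaz's framework.'' That deferred step is precisely the content of the theorem; as written the proof has a genuine gap exactly where the difficulty lives. (A secondary, smaller gap: the Kummer-sequence isomorphism only computes $\Ho^1(k,\Pic\overline X)[2]$, so to get the upper bound $\ZZ/2\ZZ$ for the full group you still need to know $\Ho^1$ is $2$-torsion, which the paper imports from a separate computation of $\#\Ho^1\mid 4$.)

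The paper closes the gap differently, and you should compare. Rather than analyzing which function-field elements are norms, it uses the inflation--restriction sequence for $L(X)/k(X)$ (Lemma \ref{lem:fixed}) to reduce descent of $[\curly A]$ to the statement that $\curly A-\sigma\curly A\neq x-\sigma x$ for every \emph{constant} $x\in\Br L$. The key computation is then that $\curly A-\sigma\curly A$ is itself a constant class: multiplying $l_{T_0}$ by its conjugate produces $-(ax_0^2+x_2^2)$ up to squares, and the defining equation of $X$ rewrites this as $b(x_1^2-bc(ix_4/b)^2)$, a norm from $L(\sqrt{bc})$ times $b$, giving $\curly A-\sigma\curly A\sim(bc,b)\sim(c,b)\in\Br L$. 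Finally, one rules out $(c,b)=x-\sigma x$ by specializing at the $\sigma$-invariant divisor $P\colon\sqrt a=0$ of $\spec\QQ^{cycl}(b,c)[\sqrt a]$: after modifying $x$ by a $\sigma$-invariant class with the same residue one may assume $x$ is unramified at $P$, whence $(x-\sigma x)|_P=0$ because $\sigma$ acts trivially on the residue field, while $(c,b)|_P=(c,b)$ is nontrivial in $\Br\QQ^{cycl}(b,c)$ by a residue computation at $c=0$. Your reduction to the ratio $(v-\sqrt{-a}\,u)/(v+\sqrt{-a}\,u)$ could in principle be pushed through, but the constant-class observation plus specialization is what makes the argument tractable, and it is the piece your proposal is missing.
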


The proof of Theorem \ref{thm:main} relies on the functoriality with respect to base extension from $k$ to $L=k(\sqrt a)$ of the Hochschild-Serre spectral sequence:
\begin{equation*}
0\rightarrow\Pic X\rightarrow (\Pic \overline X)^{G_k}\rightarrow \Br k\rightarrow \Br_1X\rightarrow \Ho^1(k,\Pic \overline X)\rightarrow \Ho^3(k,\GG _m),
\end{equation*}
where $\Br_1 X=\ker(\Br X\rightarrow \Br \overline X)$. Since $X$ is rational then $\Br_1X=\Br X$. Later in this section, we prove that the algorithm in \cite[Section 4.1]{BT} can be applied over $L$ and that the generating Brauer class of $\Br X_L/\Br L$ is not in the image of $\Br X/\Br  k$.

\subsection{Degeneracy locus of $X$}

Let $A$ and $A^{'}$ be the matrices associated to the quadratic forms of the quadrics $Q$ and $Q^{'}$ respectively. The characteristic polynomial of the pencil of quadrics $\{\lambda Q+\mu Q^{'}:[\lambda:\mu]\in\PP^1\}$ is
\begin{equation}
f(\lambda,\mu)=\det(\lambda A+\mu A^{'})= 32\mu\lambda (\lambda+\mu)(b\lambda+\mu)(a\lambda+bc\mu).
\end{equation}
Since all the irreducible factors of $f(\lambda,\mu)$ are distinct, it is separable in $k[\lambda,\mu]$. Therefore \cite[Proposition 3.26]{Wit}, $X$ is smooth.
Moreover, the degeneracy locus $\curly S$ of this del Pezzo surface consists of the five degree 1 points $$T_0=[1:0],\; T_1=[0:1],\;T_2=[1:-1],\;T_3=[1:-b],\;\textup{and }T_4=[bc:-a]$$ corresponding to the linear factors of $f(\lambda,\mu)$.

For each point in the degeneracy locus $\curly S$, we compute the corresponding quadric and discriminant as explained in Chapter 3. We show the case corresponding to $T_0$ and summmarize the rest in an array below.
The singular locus of the quadric $Q_{T_0}=V(ax_0^2+bx_1^2+x_2^2+cx_4^2)$ is $[0:0:0:1:0]$. So we may choose $H_{T_0}$ to be $V(x_3)$. By a direct computation,
$$
\epsilon_{T_0}=16abc\sim abc \mod{k(T_0)^{\times 2}=k^{\times 2}}.
$$
The quadrics and discriminants corresponding to all the points $\{T_0,\ldots,T_4\}$ are summarized below.
\[\begin{array}{rl}
Q_{T_0}\colon ax_0^2+bx_1^2+x_2^2+cx_4^2=0, & \epsilon_{T_0}\sim abc\\
Q_{T_1}\colon bcx_0^2+x_1^2+x_2^2+ax_3^2=0, & \epsilon_{T_1}\sim abc\\
Q_{T_2}\colon (a-bc)x_0^2+(b-1)x_1^2-ax_3^2+cx_4^2=0, & \epsilon_{T_2}\sim ac(b-1)(a-bc)\\
Q_{T_3}\colon(a-b^2c)x_0^2+(1-b)x_2^2-abx_3^2+cx_4^2=0, & \epsilon_{T_3}\sim abc(1-b)(a-b^2c)\\
Q_{T_4}\colon(b^2c-a)x_1^2+(bc-a)x_2^2-a^2x_3^2+bc^2x_4^2=0, & \epsilon_{T_4}\sim b(b^2c-a)(bc-a)
\end{array}\]
 
\subsection{Computing $\Ho^1(k,\Pic\overline X)$ and $\Ho^1(L,\Pic\overline X)$}
The first part of the following Lemma is used to compute $\Ho^1(k,\Pic\overline X)$ and $\Ho^1(L,\Pic\overline X)$ and the second part will be used later in the proof of Theorem \ref{thm:main} to replace $\Br_0X$ and $\Br_0X_L$ by $\Br k$ and $\Br L$ respectively.
\begin{lem}
\label{lem:H1}
Let $K=k$ or $L$, where $k$ and $L$ are the fields defined before. For $X$ as before we have,
\begin{enumerate}
\item
$\frac{(\Pic\overline X/2\Pic\overline X)^{G_K}}{((\Pic \overline X)^{G_K}/2(\Pic\overline X)^{G_K})}=\langle C_0+C_1\rangle$
\item
$(\Pic\overline X)^{G_K}=\langle H\rangle$.
\end{enumerate}
\end{lem}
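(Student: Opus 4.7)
The plan is to compute the image of $G_K$ in the swap subgroup of $\Aut(\Pic\overline X)$ explicitly for both $K=k$ and $K=L$, and then read off $(\Pic\overline X)^{G_K}$ and $(\Pic\overline X/2\Pic\overline X)^{G_K}$ by linear algebra in the basis of Proposition~\ref{prop:pic}. Because each degenerate fiber $T_i\in\curly S$ is $k$-rational, Proposition~\ref{prop:action} guarantees that the Galois action factors through the even-parity subgroup of $(\ZZ/2\ZZ)^5$, where the $i$-th coordinate of $\sigma$ is $1$ iff $\sigma(\sqrt{\epsilon_{T_i}})=-\sqrt{\epsilon_{T_i}}$. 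Thus the Galois image equals $\Gal(K(\sqrt{\epsilon_{T_0}},\ldots,\sqrt{\epsilon_{T_4}})/K)$ viewed inside $(\ZZ/2\ZZ)^5$.

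To pin this image down, I would use the table of discriminants together with the fact that $-1\in(\QQ^{cycl})^{\times 2}$ and that $\QQ^{cycl}[a,b,c]$ and $L[b,c]$ are UFDs, to verify: each $\epsilon_{T_i}$ is a non-square; $\epsilon_{T_0}\epsilon_{T_1}\in K^{\times 2}$ (both are $\sim abc$, resp.\ $\sim bc$ over $L$); $\epsilon_{T_2}\epsilon_{T_3}\epsilon_{T_4}\in K^{\times 2}$ by direct expansion (the factor $(b-1)(1-b)=-(b-1)^2$ is absorbed since $-1$ is a square); and no further nontrivial relation. This identifies the image as the order-$8$ subgroup
\[
\{s\in (\ZZ/2\ZZ)^5 : s_0=s_1,\ s_2+s_3+s_4\equiv 0\pmod 2\},
\]
uniformly for $K=k$ and $K=L$. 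In particular each coordinate is individually swapped by some element of the image.

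For part (2) I would write $D=n\,e_0+\sum_i m_i C_i$ with $e_0=\tfrac12(H+\sum_i C_i)$, compute $\tau D$ for a swap $\tau$ with vector $s$ using $\tau C_i=C_i+s_i(H-2C_i)$ and $H=2e_0-\sum_j C_j$, and match coefficients against $D$. The equation $\tau D=D$ collapses to the single family of constraints $s_j(n+2m_j)=0$ for every $j$; since each coordinate is hit, this forces $m_0=\cdots=m_4=-n/2$, so $n$ is even and $D=(n/2)H$, giving $(\Pic\overline X)^{G_K}=\langle H\rangle$.

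For part (1) I would rerun the same calculation over $\FF_2$: any nontrivial swap immediately forces $\bar n=0$, then weight-$2$ swaps impose $\bar m_{i_1}=\bar m_{i_2}$ and weight-$4$ swaps impose $\sum_{s_i=1}\bar m_i=0$. The weight-$2$ swaps available in our image yield $\bar m_0=\bar m_1$ and $\bar m_2=\bar m_3=\bar m_4$ (the weight-$4$ conditions follow automatically), so $(\Pic\overline X/2\Pic\overline X)^{G_K}$ is the $\FF_2$-plane with basis $\{\bar C_0+\bar C_1,\ \bar C_2+\bar C_3+\bar C_4\}$. Since $\bar H=\sum_j\bar C_j$ is the sum of these two vectors, quotienting by $(\Pic\overline X)^{G_K}/2(\Pic\overline X)^{G_K}=\langle\bar H\rangle$ identifies them and leaves the $1$-dimensional quotient $\langle\bar C_0+\bar C_1\rangle$. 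The main obstacle I expect is the square-class bookkeeping in the second paragraph --- ruling out any hidden relation beyond the two listed --- which is the one place where the specific field $\QQ^{cycl}(a,b,c)$ genuinely enters; once the Galois image is nailed down, everything else is forced linear algebra, and reassuringly it gives the same answer for $K=k$ and $K=L$, as the subsequent base-change argument for Theorem~\ref{thm:main} will require.
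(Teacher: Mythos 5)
Your proposal is correct and follows essentially the same route as the paper: both reduce to the even-exchange subgroup of $(\ZZ/2\ZZ)^5$ via Proposition \ref{prop:action}, use the discriminant table to determine which exchange patterns occur (the relations $\epsilon_{T_0}\sim\epsilon_{T_1}$ and $\epsilon_{T_2}\epsilon_{T_3}\epsilon_{T_4}\in K^{\times 2}$), and then match coefficients in the basis $\tfrac12(H+\sum_iC_i),C_0,\ldots,C_4$ to land on the same generators $C_0+C_1$ and $C_2+C_3+C_4$ with $\bar H$ their sum. The one点 where you are slightly more careful than the paper is in pinning down the Galois image as exactly the order-$8$ subgroup by verifying independence of the square classes in the relevant UFDs, whereas the paper infers the needed exchange patterns from the transitivity statement alone; this is a welcome tightening, not a different method.
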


\begin{proof}
We start by proving that $(\Pic\overline X/2\Pic\overline X)^{G_K}=\langle H,C_0+C_1\rangle$. 
By Proposition \ref{prop:pic}, classes in $\Pic\overline X/2\Pic\overline X$ can be represented by divisors of the form $D=\beta(\frac{H+\Sigma_iC_i}{2})+\Sigma_i\alpha_iC_i$ where $\beta$ and $\alpha_i$ are either 0 or 1  for all $i\in\{0,\ldots,4\}$. 
Let $\sigma$ be any element in $G_K$. By Proposition \ref{prop:action} and since each $T\in\curly S$ has degree 1, $\sigma$ is the product of an even number of exchanges between $C_i$ and $C_i^{'}=H-C_i$ for $i\in\{0,\ldots 4\}$. Let $I\subset\{0,\ldots,4\}$ be the set of indices of the exchanges that are factors of $\sigma$. Since $\sigma$ is the product of an even number of exchanges, $I$ has even cardinality. The Galois element $\sigma$ is determined by the set of indices $I$; so we denote it by $\sigma_I$. We are interested in characterizing $D\in(\Pic\overline X/2\Pic\overline X)^{G_K}$ or equivalently $D\in\Pic\overline X$ such that $D-\sigma_I D\in 2\Pic\overline X$ for every $\sigma_I\in G_K$. First we compute $\sigma_I D$ for any $\sigma_I\in G_K$. Let $E:=\frac{H+\Sigma_iC_i}{2}$ and $\gamma:=\Sigma_{i\in I}\alpha_i$.
\begin{align*}
\sigma_I D
&=\sigma(\beta(E)+\Sigma_i\alpha_iC_i)\\
&=\beta\left(\frac{H+\Sigma_{i\in I}(H-C_i)+\Sigma_{i\notin I}C_i}{2}\right)+\Sigma_{i\notin I}\alpha_iC_i+\Sigma_{i\in I}\alpha_i(H-C_i)\\
&=\beta\left((1+|I|)E-\frac{2+|I|}{2}\Sigma_{i\in I}C_i-\frac{|I|}{2}\Sigma_{i\notin I}C_i\right)+\Sigma_{i\notin I}\alpha_iC_i+2\gamma E\\&-\Sigma_{i\in I}(\gamma+\alpha_i)C_i-\gamma\Sigma_{i\notin I}C_i\\
&=(\beta+\beta |I|+2\gamma)E-\Sigma_{i\in I}\left(\frac{\beta |I|+2\gamma}{2}+\beta+3\alpha_i\right)C_i-\frac{\beta |I|+2\gamma+2\alpha_i}{2}\Sigma_{i\notin I}C_i.
\end{align*}
We expand and arrange $D-\sigma_I D$ as
\begin{align*}
D-\sigma_I D
&=-(\beta |I|+2\gamma)E+\Sigma_{i\in I}\left(\frac{\beta |I|+2\gamma}{2}+\beta+2\alpha_i\right)C_i+\frac{\beta |I|+2\gamma}{2}\Sigma_{i\notin I}C_i.
\end{align*}
By transitivity of the action of $G_K$, Proposition \ref{prop:action}, we may assume $I$ to be non trivial.
By Considering the coefficients of $D-\sigma_I D$ we get
\begin{align*}
\frac{\beta |I|+2\gamma}{2}\equiv 0\pmod{2}, \quad
\frac{\beta |I|+2\gamma}{2}+\beta+2\alpha_j\equiv 0\pmod 2.
\end{align*} 
Hence $\beta=0$. So $\gamma$ is even. 

Now we consider the possibilities of the even cardinality sets $I\subset\{0,\ldots,4\}$. Since $C_0,C_1$ are defined over $K(\sqrt{\epsilon_{T_0}})=K(\sqrt{\epsilon_{T_1}})$, then either both exchanges between $\{C_0,C_0^{'}\}$ and $\{C_1,C_1^{'}\}$ are factors of $\sigma\in G_K$ or both are not. Hence $\#(I\cap\{0,1\})\neq 1$. By the computation of $D-\sigma_I D$ before when $I=\{0,1\}$, we deduce that $\alpha_0+\alpha_1\equiv 0\pmod 2$. 
Since $I$ has even cardinality and $\#(I\cap\{0,1\})\neq 1$ and by transitivity of the action of $G_K$ on $\{C_2,C_2^{'}\}$, $\{C_3,C_3^{'}\}$, and $\{C_4,C_4^{'}\}$, the nontrivial possibilities of $I-\{0,1\}\cap I$ are $\{2,3\}$, $\{2,4\}$, and $\{3,4\}$. The class of the divisor $D\pmod 2$ is fixed by $\sigma_I$ where $I-\{0,1\}\cap I$ is $\{2,3\}$, $\{2,4\}$, or $\{3,4\}$ if and only if $\alpha_2+\alpha_3$, $\alpha_2+\alpha_4$, or $\alpha_3+\alpha_4$ are even respectively. From the discussion before we deduce
$$
\alpha_0+\alpha_1\equiv 0,\quad
\alpha_2+\alpha_3\equiv 0,\quad
\alpha_2+\alpha_4\equiv 0,\quad\textup{and }
\alpha_3+\alpha_4\equiv 0\pmod 2.
$$
Since $\alpha_i\in\{0,1\}$, from the above congruences we deduce that $D$ is $0$, $C_0+C_1$, $C_2+C_3+C_4$, or $\Sigma_iC_i$. Hence $(\Pic\overline X/2\Pic\overline X)^{G_K}=\langle C_0+C_1,C_2+C_3+C_4\rangle$. Further,
$$
C_2+C_3+C_4=C_0+C_1+2(\frac{1}{2}(H+\Sigma C_i))-2C_0-2C_1-H.
$$
So we may rewrite the generators as $(\Pic\overline X/2\Pic\overline X)^{G_K}=\langle H,C_0+C_1\rangle$.

We prove that $\frac{(\Pic\overline X/2\Pic\overline X)^{G_K}}{((\Pic \overline X)^{G_K}/2(\Pic\overline X)^{G_K})}=\langle C_0+C_1\rangle$ or equivalently that $(\Pic\overline X)^{G_K}=\langle H\rangle$. Let $\Sigma_{i}a_iC_i$ be a nontrivial combination of the $C_i$'s. Let $j\in\{0,\ldots 4\}$ be an arbitrary element such that $a_j\neq 0$. By Proposition \ref{prop:action} and the fact that $\epsilon_j$ is not a square in $K(T_j)^{\times}=K^{\times}$, there exists a $\tau\in G_K$ such that $\tau C_j=C_j^{'}=H-C_j$. Therefore, $\tau(\Sigma_{i}a_iC_i)=-\alpha_jC_j+D^{'}$ where $D^{'}$ is a linear combination of $C_i$ $i\neq j$. Hence $\Sigma_{i}a_iC_i$ is not fixed by $G_K$. Therefore $(\Pic\overline X)^{G_K}=\langle H\rangle$ and this proves (1) and (2). 
\end{proof}

\begin{prop}
\label{prop:H1}
For $X$ and $k$, and $L$ as before, we have $\Ho^1(k,\Pic\overline X)\simeq\Ho^1(L,\Pic\overline X)\simeq\ZZ/2\ZZ$ and $\textup{Res}\colon\Ho^1(k\Pic\overline X)\rightarrow \Ho^1(L,\Pic\overline X)$ is an isomorphism.
\end{prop}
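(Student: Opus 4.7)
My plan is to combine Lemma~\ref{lem:H1} with the standard isomorphism
$$
\frac{(\Pic\overline X/2\Pic\overline X)^{G_K}}{(\Pic\overline X)^{G_K}/2(\Pic\overline X)^{G_K}} \xrightarrow{\sim} \Ho^1(K,\Pic\overline X)[2]
$$
coming from the Galois cohomology long exact sequence associated to $0\to\Pic\overline X\xrightarrow{\times 2}\Pic\overline X\to \Pic\overline X/2\Pic\overline X\to 0$. Applying this with $K\in\{k,L\}$, Lemma~\ref{lem:H1} identifies the left-hand side with $\langle C_0+C_1\rangle\simeq\ZZ/2\ZZ$, so $\Ho^1(K,\Pic\overline X)[2]\simeq\ZZ/2\ZZ$ in both cases.

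Next I would promote this to a computation of the full $\Ho^1$ by arguing that $\Ho^1(K,\Pic\overline X)$ is $2$-torsion. Since all five points of the degeneracy locus $\curly S$ are rational over $k$ (and hence over $L$), Proposition~\ref{prop:action} forces the image of $G_K$ in $\Aut(\Pic\overline X)$ to sit inside $\curly O(K_X^\bot)\cap(\ZZ/2\ZZ)^5\simeq(\ZZ/2\ZZ)^4$, an elementary abelian $2$-group. So $\Ho^1(K,\Pic\overline X)$ is a finite $2$-primary group, and since its $2$-torsion is cyclic of order $2$, it must itself be cyclic of the form $\ZZ/2^m\ZZ$ for some $m\geq 1$. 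An explicit cocycle computation in the basis $\{\tfrac12(H+\sum_iC_i),C_0,\ldots,C_4\}$, using the constraints $(1+\sigma)f(\sigma)=0$ for elements of order $2$ and $(1-\sigma)f(\tau)=(1-\tau)f(\sigma)$ from commutativity, together with torsion-freeness of $\Pic\overline X$, shows $m=1$. Combining with the previous paragraph, $\Ho^1(K,\Pic\overline X)\simeq\ZZ/2\ZZ$ for both $K\in\{k,L\}$.

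For the restriction map, functoriality of the identification in the first paragraph under the inclusion $G_L\subseteq G_k$ is the key point. The generator of $\Ho^1(k,\Pic\overline X)$ corresponds to $C_0+C_1\in(\Pic\overline X/2\Pic\overline X)^{G_k}$; regarded in $(\Pic\overline X/2\Pic\overline X)^{G_L}$, the same class is the generator of $\Ho^1(L,\Pic\overline X)$ by Lemma~\ref{lem:H1} applied with $K=L$. Hence $\textup{Res}$ sends a generator to a generator, and is therefore an isomorphism. The principal obstacle in this plan is the $2$-torsion step: Lemma~\ref{lem:H1} only yields $\Ho^1[2]$, so ruling out $\ZZ/2^m$ with $m\geq 2$ requires genuine cocycle-level or cohomological work exploiting the specific $G$-module structure of $\Pic\overline X$, and cannot be read off from the fact that the Galois image is an elementary abelian $2$-group alone.
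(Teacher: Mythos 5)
Your overall skeleton matches the paper's: both use the sequence $0\to\Pic\overline X\xrightarrow{\times 2}\Pic\overline X\to\Pic\overline X/2\Pic\overline X\to 0$ together with Lemma~\ref{lem:H1} to identify $\Ho^1(K,\Pic\overline X)[2]\simeq\ZZ/2\ZZ$ for $K\in\{k,L\}$, and both conclude that $\textup{Res}$ is an isomorphism by tracking the class of $C_0+C_1$ through this identification. Your intermediate reductions are also sound: since every $T\in\curly S$ has degree $1$, Proposition~\ref{prop:action} does place the image $G$ of $G_K$ inside an elementary abelian $2$-group of rank at most $4$, so $\Ho^1(K,\Pic\overline X)\simeq\Ho^1(G,\Pic\overline X)$ is killed by $|G|$ and hence $2$-primary, and a finite abelian $2$-group whose $2$-torsion is $\ZZ/2\ZZ$ is indeed cyclic of $2$-power order.

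The genuine gap is the step $m=1$, and you have in effect flagged it yourself. The assertion that ``an explicit cocycle computation \dots shows $m=1$'' is precisely the content that must be proved, and your write-up does not carry it out: the relations $(1+\sigma)f(\sigma)=0$ and $(1-\sigma)f(\tau)=(1-\tau)f(\sigma)$ are correct, but it is not shown that they force $2f$ to be a coboundary for the specific $G$-module $\Pic\overline X$, and (as you note) the fact that $G$ has exponent $2$ does not by itself bound the exponent of $\Ho^1(G,M)$ --- only $|G|$ gives a bound in general. The paper closes this gap by a different route: it appeals to the verification, via the \texttt{Magma} script accompanying the \texttt{arXiv} distribution of \cite{BT}, that $\Ho^1(k,\Pic\overline X)$ for a del Pezzo surface of degree $4$ has order dividing $4$ and is $2$-torsion, whence $\Ho^1(K,\Pic\overline X)=\Ho^1(K,\Pic\overline X)[2]\simeq\ZZ/2\ZZ$. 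To make your argument self-contained you would need to actually perform the cocycle calculation for your explicit $G\subseteq(\ZZ/2\ZZ)^4$ acting on $\ZZ^6$ (or cite the classification of $\Ho^1$ for del Pezzo surfaces of degree $4$); as written, the proposal defers exactly the step on which the proposition turns.
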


\begin{proof}
Consider the following short exact sequence
$$
0\rightarrow\Pic\overline X\xrightarrow{\times 2}\Pic \overline X\rightarrow (\Pic\overline X/2\Pic\overline X)\rightarrow 0.
$$
The connecting morphism in the induced long exact sequence yields
\begin{equation}
\label{piciso}
\frac{(\Pic\overline X/2\Pic\overline X)^{G_K}}{((\Pic \overline X)^{G_K}/2(\Pic\overline X)^{G_K})}\xrightarrow{\sim}\Ho^1(K,\Pic\overline X)[2];\quad [D]\mapsto (\sigma\mapsto\frac{1}{2}(d-\sigma d))
\end{equation}
where $K=k$ or $L$, and $d$ is a lift of $D$ to $\Pic\overline X$. 
By Lemma \ref{lem:H1} (1) and by the isomorphism \eqref{piciso}, we deduce that $\Ho^1(K,\Pic\overline X)[2]\simeq \ZZ/2\ZZ$. Further, $\Ho^1(K,\Pic\overline X)$ is 2-torsion because the cardinality of $\Ho^1(k,\Pic\overline X)$ divides $4$ as verified in \texttt{Magma}~\cite{Magma} script in the \texttt{arXiv} distribution of \cite{BT}. So $\Ho^1(k,\Pic\overline X)\simeq\Ho^1(L,\Pic\overline X)\simeq\ZZ/2\ZZ$. Moreover, the image of the cocycle corresponding to $C_0+C_1$, that generates $\frac{(\Pic\overline X/2\Pic\overline X)^{G_K}}{((\Pic \overline X)^{G_K}/2(\Pic\overline X)^{G_K})}$ by Lemma \ref{lem:H1}, under the restriction map $\textup{Res}\colon\Ho^1(k\Pic\overline X)\rightarrow \Ho^1(L,\Pic\overline X)$ is the cocycle corresponding to $C_0+C_1$. So $\textup{Res}\colon\Ho^1(k,\Pic\overline X)\rightarrow \Ho^1(L,\Pic\overline X)$ is an isomorphism.
\end{proof}

\subsection{The Brauer group of $X$ over $L=k(\sqrt a)$}

We use the algorithm in \cite[Section 4.1]{BT} to prove that $\Br X_L/\Br L\simeq \ZZ/2\ZZ$ and to explicitly construct a non trivial algebra $\curly A$ whose class generates $\Br X_L/\Br L$. This algebra will be used in the proof of Theorem \ref{thm:main}.



\begin{prop}
\label{prop:L}
The Brauer group $\Br X_L/\Br L$ is isomorphic to $\ZZ/2\ZZ$ and is generated by the class of the quaternion algebra:
$$
\curly A=\left(\epsilon_{T_0},\frac{(2aix_0+2\sqrt ax_2)(2ix_1+2x_2)}{(x_0+x_1)^2}\right).
$$
Moreover, $\curly A-\sigma\curly A\sim (c,b)\in\Br L$.
\end{prop}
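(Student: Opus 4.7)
The plan is to invoke Proposition \ref{prop:algebra} over $L$ with the subscheme $\curly T=\{T_0,T_1\}\subset\curly S$, identify the resulting cyclic algebra with $\curly A$, and then compute the descent $\curly A-\sigma\curly A$ by explicit quaternion-symbol manipulation.

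First, I verify $(*)$ and the smooth-point hypothesis for $\curly T$ over $L$. From the discriminant table, $\epsilon_{T_0}=\epsilon_{T_1}=abc$, which reduces to $bc$ in $L^\times/L^{\times2}$; by algebraic independence of $a,b,c$ over $\QQ^{cycl}$ this is a non-square, and $\Pi_{T\in\curly T}N_{L(T)/L}(\epsilon_T)=(bc)^2\in L^{\times2}$, so $(*)$ holds with $L_{\curly T}=L(\sqrt{bc})$. Since $i,\sqrt a\in L$, the points $P_0=[1:0:-i\sqrt a:0:0]\in Q_{T_0}$ and $P_1=[0:1:-i:0:0]\in Q_{T_1}$ are $L$-rational and smooth (they avoid the vertices $[0{:}0{:}0{:}1{:}0]$ and $[0{:}0{:}0{:}0{:}1]$). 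The tangent hyperplanes at $P_0,P_1$ are (up to scalars) $l_{T_0}=2aix_0+2\sqrt a x_2$ and $l_{T_1}=2ix_1+2x_2$; with $l=x_0+x_1$, Proposition \ref{prop:algebra} produces exactly the displayed $\curly A$ and places it in $\Br X_L$.

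Next, to see $[\curly A]$ generates a subgroup of order $2$, note that $\epsilon_{T_2}\sim c(b-1)(a-bc)$ is a non-square in $L$ by algebraic independence, so the second assertion of Proposition \ref{prop:algebra} says $[\curly A]$ maps to the nonzero cocycle of $\Ho^1(L,\Pic\overline X)$ exhibited in Proposition \ref{prop:H1}. Rationality of $X$ forces $\Br X_L=\Br_1 X_L$, and the Hochschild--Serre injection $\Br X_L/\Br L\hookrightarrow\Ho^1(L,\Pic\overline X)\simeq\ZZ/2\ZZ$ then upgrades to an isomorphism, with $[\curly A]$ as generator.

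For $\curly A-\sigma\curly A\sim(c,b)$, observe that $\sigma$ fixes $l$, $l_{T_1}$ and $abc$, while $\sigma(l_{T_0})=2aix_0-2\sqrt a x_2$, so $\curly A-\sigma\curly A=(abc,\,l_{T_0}/\sigma(l_{T_0}))$. Rewriting as $(abc,\, l_{T_0}^2/(l_{T_0}\sigma(l_{T_0})))$ and computing $l_{T_0}\sigma(l_{T_0})=-4a(ax_0^2+x_2^2)$, the $Q$-relation $ax_0^2+x_2^2=-(bx_1^2+cx_4^2)$ and bilinearity decompose the symbol into
\[
(abc,a)+(abc,b)+(abc,\, x_1^2+(c/b)x_4^2).
\]
The first summand vanishes because $a\in L^{\times2}$; the second reduces to $(c,b)$ after using $(a,b)=0$ and $(b,b)=(-1,b)=0$ (valid because $-1$ is a square in $\QQ^{cycl}\subset L$). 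The main obstacle I anticipate is the third summand: one must recognise $x_1^2+(c/b)x_4^2$ as the $L(X)(\sqrt{bc})/L(X)$-norm of $x_1+(i/b)\sqrt{bc}\,x_4$, whence $(bc,\cdot)$ kills it, and combine with $abc\sim bc$ in $L^\times/L^{\times2}$ to annihilate the whole term. Assembling the pieces yields $\curly A-\sigma\curly A=(c,b)$ as a constant class in $\Br L$.
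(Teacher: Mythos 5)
Your proposal is correct and follows essentially the same route as the paper: verify $(*)$ and the smooth-point hypothesis for $\curly T=\{T_0,T_1\}$ over $L$, apply Proposition \ref{prop:algebra} (the paper phrases this as applying the algorithm of \cite[Section 4.1]{BT}) to get $\curly A$ and its nontrivial image in $\Ho^1(L,\Pic\overline X)\simeq\ZZ/2\ZZ$, and then compute $\curly A-\sigma\curly A$ via $l_{T_0}\sigma(l_{T_0})=-4a(ax_0^2+x_2^2)$, the relation from $Q$, and the norm form for $\sqrt{bc}$. The only differences are cosmetic (your smooth points are projectively the same as the paper's, and your symbol bookkeeping matches the paper's term by term), so there is nothing further to flag.
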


\begin{proof}
The same computation of the degeneracy locus, associated quadrics, and discriminants of $X$ in Section 4.1 still work over $L$. Moreover $\curly T=\{T_0,T_1\}$ is a degree two subscheme of $\curly S$ such that $\Pi_{T\in\curly T}N_{L(T)/L}=\epsilon_{T_0}\epsilon_{T_1}=a^2b^2c^2\in L(T)^{\times 2}=L^{\times 2}$, and the element $\epsilon_{T_0}=\epsilon_{T_1}=bc$ is non trivial in $L^{\times}/L^{\times 2}$. Hence $\curly T$ satisfies $(*)$ as defined in Section 2. By direct computations we show $\{T_0,T_1\}$ is the only subscheme of $\curly S$ that satisfies $(*)$. Moreover, the quadrics $Q_{T_0}:ax_0^2+bx_1^2+x_2^2+cx_4^2=0$ and $Q_{T_1}: bcx_0^2+x_1^2+x_2^2+ax_3^2=0$ have smooth $L$-points $P_{T_0}=[i:0:\sqrt a:0:0]$ and $P_{T_1}=[0:i:1:0:0]$ respectively. 
Further, $\epsilon_{T_2}$ is not a square in $L^{\times}$. So by the algorithm in \cite[Section 4.1]{BT}, 
$\Br X_L/\Br L\simeq \Ho^1(L,\Pic\overline X)\simeq \ZZ/2\ZZ$ and is generated by the algebra 
$$
\curly A=\left(\epsilon_{T_0},\frac{l_{T_0}l_{T_1}}{l^2}\right)
$$
where $l_T$ is the $L(T)$-linear form such that the associated hyperplane is tangent to $Q_T$ at a smooth point $P_T$ of $Q_T$ for $T\in\curly T$, and $l$ is any linear form. 

Computing the linear forms we get, $l_{T_0}:2aix_0+2\sqrt ax_2$, and $l_{T_1}:2ix_1+2x_2$. Substituting these into $\curly A$ yields the required algebra.

We have
\begin{align*}
&\curly A-\sigma \curly A\\&=\left(bc,\frac{(2aix_0+2\sqrt ax_2)(2ix_1+2x_2)}{(x_0+x_1)^2}\right)\left(bc,\frac{(2aix_0-2\sqrt ax_2)(2ix_1+2x_2)}{(x_0+x_1)^2}\right)\\&=\left(bc, \frac{-4(\sqrt a)^2(ax_0^2+x_2^2)(2ix_1+2x_2)^2}{(x_0+x_1)^4}\right)\\
&\sim \left(bc, \frac{-(ax_0^2+x_2^2)}{(x_0+x_1)^2}\right)
\end{align*}
By the defining equation of $Q^{'}$, $ax_0^2+x_2^2=-bx_1^2-cx_4^2=-b(x_1^2-cb(i\frac{x_4}{b})^2)$.
So $\curly A-\sigma\curly A\sim (bc,b)\sim (c,b)$.
\end{proof}

\subsection{Characterizing $\im (\Br X/\Br k\rightarrow \Br X_L/\Br L)$}
Let $\Gal(L/k)=\langle\sigma\rangle$.
\begin{lem}
\label{lem:fixed}
Let $\Gal(L/k)=\langle\sigma\rangle$.
If $\curly A-\sigma\curly A\neq x-\sigma x$ for every $x\in\Br L$ then $[\curly A]\notin \im (\Br X/\Br k\rightarrow \Br X_L/\Br L)$.
\end{lem}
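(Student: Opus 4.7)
The plan is to establish the contrapositive: assuming $[\curly A] \in \im(\Br X/\Br k \to \Br X_L/\Br L)$, I will exhibit an element $x \in \Br L$ with $\curly A - \sigma \curly A = x - \sigma x$, contradicting the hypothesis of the lemma.

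First I unpack what the assumption means concretely. If $[\curly A]$ lies in the image, then there exists a class $\curly B \in \Br X$ whose pullback to $\Br X_L$ represents the same class as $\curly A$ modulo $\Br L$; equivalently, we may write
\[
\curly A \;=\; \curly B_L + x \qquad \text{in } \Br X_L
\]
for some $x \in \Br L$, where $\Br L$ is identified with its image in $\Br X_L$ under pullback along the structure morphism $X_L \to \spec L$.

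Next I exploit Galois functoriality. The generator $\sigma$ of $\Gal(L/k)$ acts on $X_L = X \times_k \spec L$ through the second factor and hence on $\Br X_L$; under this action the class $\curly B_L$ is fixed, since $\curly B$ is already defined over $k$ (formally, $\curly B_L$ is the pullback of $\curly B$ along the projection $X_L \to X$, which is $\sigma$-equivariant with trivial $\sigma$-action on the target). On the subgroup $\Br L \subset \Br X_L$ the induced action coincides with the usual Galois action on $\Br L$. Applying $\sigma$ to the displayed equation therefore gives $\sigma \curly A = \curly B_L + \sigma x$, and subtracting yields $\curly A - \sigma \curly A = x - \sigma x$, which is the desired contradiction.

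This is a short cocycle-style argument once the relevant functorialities are in place, so the only subtlety I anticipate is verifying the compatibility of the Galois actions on $\Br X_L$, on the image of $\Br X$, and on $\Br L \subset \Br X_L$. Since $L/k$ is Galois and all the embeddings arise from standard pullbacks, this is routine and I would invoke it without further comment in the final write-up.
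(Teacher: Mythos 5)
Your proposal is correct and follows essentially the same route as the paper: assume $[\curly A]$ is in the image, write $\curly A - x$ as a class pulled back from $X$, observe that such a class is fixed by $\sigma$, and rearrange to get $\curly A-\sigma\curly A=x-\sigma x$. The only cosmetic difference is that the paper justifies the $\sigma$-invariance by passing to function fields and quoting the inflation-restriction sequence for $L(X)/k(X)$, whereas you justify it by the $\sigma$-equivariance of the projection $X_L\to X$; these are the same invariance statement.
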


\begin{proof}
By \cite[Proposition 3.3.17]{CSA}, the generalized inflation-restriction sequence for the field extension $L(X)/k(X)$ is
$$
0\rightarrow \Ho^2(\Gal(L/k),L(X)^{\times})\xrightarrow{\textup{inf}} \Ho^2(k(X),\overline {k(X)}^{\times})\xrightarrow{\textup{Res}} \Ho^2(L(X),\overline {k(X)}^{\times})^{\Gal(L/k)}.
$$
If the class of $\curly A$ is in the image of $\Br X/\Br k\rightarrow \Br X_L/\Br L$, then there exists $x\in\Br L$ such that $\curly A-x\in \im (\Br X\rightarrow \Br X_L)$. Hence $\curly A-x\in \im (\Br k(X)\rightarrow \Br L(X))$. By the generalized inflation-restriction sequence above, $\curly A -x$ is fixed by $\Gal(L/k)=\langle\sigma\rangle$. Rearranging we get that $\curly A-\sigma\curly A=x-\sigma x$.
\end{proof}



\subsection{Proof of Theorem \ref{thm:main}}


First we may replace $\Br_0 X$ and $\Br_0 X_L$ by $\Br k$ and $\Br L$ respectively because the map $(\Pic\overline X)^{G_K}=\langle H\rangle\rightarrow \Br K$ is trivial by the exact sequence that follows from the Hochschild-Serre spectral sequence where $K=k$ or $L$.

For the sake of contradiction we assume that $\Br X/\Br k$ is non trivial. Since there is an injection $\Br X/\Br k\hookrightarrow \Ho^1(k,\Pic\overline X)\simeq \ZZ/2\ZZ$ by the exact sequence that follows from the Hochschild-Serre spectral sequence and $\Br X/\Br k$ is nontrivial, there is a unique nontrivial class in $\Br X/\Br k$; denote this class by $[\curly B]$. By Proposition \ref{prop:H1}, Proposition \ref{prop:L} and the functoriality of the Hochschild-Serre spectral sequence we have
$$
\begin{tikzcd}
\Br X/\Br k\dar\arrow[r,"\simeq"] & \Ho^1(k,\Pic\overline X)\arrow[d,"\simeq"]\\
\Br X_L/\Br L\arrow[r,"\simeq"] & \Ho^1(L,\Pic\overline X).
\end{tikzcd}
$$
So $[\curly B]\in \Br X/\Br k$ gets mapped to $[\curly A]\in \Br X_L/\Br L$ as defined in Proposition \ref{prop:L} by the field extension map. We will show that any algebra in the class of $[\curly A]\in \Br X_L/\Br L$ is not in the image of the map $\Br X\rightarrow \Br X_L$, thus resulting in a contradiction. By Lemma \ref{lem:fixed} it is enough to prove that $\curly A-x$ is not fixed by $\sigma$ for all $x\in\Br L$.

Suppose that there exists $x\in\Br L$ such that $\curly A-x$ is fixed by $\sigma$, i.e., $\curly A-\sigma\curly A=x-\sigma x$. By Proposition \ref{prop:L}, both sides of the equation $\curly A-\sigma A=x-\sigma x$ are in $\Br L$. Let $Y=\spec{\QQ^{cycl}(b,c)[\sqrt a]}$. Let $P:\sqrt a=0$ be a divisor on $Y$. Because $L$ is the function field of $Y$ and $P$ is a divisor on $Y$, we have the residue sequence, (\cite{Gro1}, \cite{Gro2},\cite{Gro3}),
$$
\Br\curly O_{Y,\eta}\rightarrow \Br L\xrightarrow{\partial_P}\Ho^1(\QQ^{cycl}(b,c)(P),\QQ/\ZZ).
$$
If $x$ is cyclic of degree $n$ then $\partial_P(x)$ is determined by a degree $n$ cyclic extension of $\QQ^{cycl}(b,c)$ and a choice of a generator of the Galois group of the extension. By Kummer Theory the cyclic extension determined by $\partial_P(x)$ is of the form $\QQ^{cycl}(b,c)(\sqrt[n]{\alpha})/\QQ^{cycl}(b,c)$ where $\alpha\in\QQ^{cycl}(b,c)^{\times}$.
Let $f=f(b,c)\in L$ be a function of $b$, and $c$. By \cite[7.5.1]{CSA}, the residue $\partial_P$ of the cyclic algebra $(\sqrt a,f(b,c))_n\in\Br L$ is determined by the cyclic extension $$\QQ^{cycl}(b,c)(\sqrt[n]{(\sqrt a) ^{-v_P(f)}f^{v_P(\sqrt a)}})/\QQ^{cycl}(b,c)$$ and a choice of generator of the Galois group of this extension. Therefore we may choose $f$ such that $\partial_P((\sqrt a,f(b,c))_n)=\partial_P(x)$. Furthermore $\sigma((\sqrt a, f(b,c))_n)=(-\sqrt a,f(b,c))_n=(\sqrt a, f(b,c))_n$ because $-1$ is an $n$-th root of unity in $L$. Since $(\sqrt a,f(a,b))_n$ is fixed by $\sigma$, subtracting $(\sqrt a, f(b,c))$ from $x$ will not change $x-\sigma x$. Moreover, $(\sqrt a, f(b,c))$ has the same residue as $x$ at $P$; so we may assume without loss of generality that $x$ is unramified at $P$. By Murkurjev-Suslin Theorem and the fact that $\partial_P$ is a homomorphism we extend this argument to any central simple algebra $x\in\Br L$. So we assume $x$ is unrafimied at $P$ in general. We claim that $\curly A-\sigma\curly A$ is also unramified at $P$ and we prove it later. Hence we may specialize the equation $\curly A-\sigma\curly A=x-\sigma x$ at the divisor $P$,
$$
(\curly A-\sigma\curly A)_{|P}=x_{|P}-(\sigma x)_{|P}.
$$
The action of $\sigma$ on $x$ commutes with specialization. Furthermore, $P$ is invariant under $\sigma$. Hence $(\sigma x)_{|P}=\sigma x_{|P}=x_{|P}$. So $(\curly A-\sigma\curly A)_{|P}$ is trivial in $\Br \kappa$ where $\kappa$ is the residue field of $Y$ at $P$. In our example $\kappa = \QQ^{cycl}(b,c)$. Since $\curly A-\sigma\curly A\sim (b,c)$, by Proposition \ref{prop:L}, is a constant Azumaya algebra in $\Br\curly O_{Y,\eta}$, $(\curly A-\sigma\curly A)_{|P}=\curly A-\sigma\curly A\sim (b,c)$. Hence $(b,c)$ is trivial in $\Br \QQ^{cycl}(b,c)$. This is a contradiction because the extension $\QQ^{cycl}(b)(\sqrt{c^{-v_D(b)}b^{v_D(c)}})/\QQ^{cycl}(b)$ associated to $\partial _{c=0}((c,b))$ by \cite[7.5.1]{CSA} is non trivial.\qed

Now we prove the claim that the algebra $\curly A-\sigma\curly A$ is unramified at $P\colon \sqrt a=0$.
We have the residue sequence, (\cite{Gro1}, \cite{Gro2},\cite{Gro3})
$$
\Br\curly O_{Y,\eta}\rightarrow \Br L\xrightarrow{\partial_P}\Ho^1(\QQ^{cycl}(b,c)(P),\QQ/\ZZ).
$$
By \cite[7.5.1]{CSA}, the residue $\partial_P$ of the cyclic algebra $\curly A-\sigma\curly A\sim (b,c)\in\Br L$ is determined by the cyclic extension $\QQ^{cycl}(b,c)(\sqrt{ b ^{-v_P(c)}c^{v_P(b)}})/\QQ^{cycl}(b,c)$ and a choice of a generator of the Galois group of this extension. The valuations $v_P(c)=v_P(b)=0$. By the last two sentences, we deduce that $\partial _P(\curly A-\sigma\curly A)=0$. Therefore $\curly A-\sigma\curly A$ is unramified at $P$.\qed

\newpage
\bibliography{refs-1}{}
\bibliographystyle{alpha}
\vspace{4ex}
\noindent\scshape{E-mail address:}
\normalfont\bf\href{mailto:mnr6@uw.edu}{\lowercase{mnr6@math.washington.edu}}
\end{document}